\documentclass[12pt, reqno]{amsart}
\usepackage{pifont}
\usepackage{mathrsfs}
\usepackage{geometry}
\usepackage{titletoc}
\usepackage{stix2}
\usepackage{amscd}

\usepackage{amsmath}
\usepackage{amssymb} 
\usepackage{enumitem} 
\usepackage{mathtools} 
\usepackage[table]{xcolor} 
\usepackage[all]{xy} 
\usepackage{tikz} 
\usepackage{tikz-cd}
\usepackage{indentfirst} 
\usepackage{babel} 
\usepackage{setspace} 

\usepackage[colorlinks,linkcolor=red,anchorcolor=green,citecolor=blue]{hyperref} 
\hypersetup{linktocpage = true} 

\usepackage{rotating} 

\usepackage{ytableau} 
\usepackage{longtable} 
\newcolumntype{M}[1]{>{\centering\arraybackslash}m{#1}} 

\usepackage{fancyhdr}

\newcommand\cA{{\mathcal A}}
\newcommand\cB{{\mathcal B}}

\newcommand\cF{{\mathcal F}}

\newcommand\cQ{{\mathcal Q}}

\newcommand{\id}{{\rm id}}
\newcommand{\rk}{{\rm rk}}
\newcommand{\codim}{{\rm codim}}

\newcommand{\im}{{\rm im}}

\newcommand\bp{{\bar\partial}}

\usepackage{amsthm}
\theoremstyle{plain}

\newtheorem{theorem}{Theorem}[section]
\newtheorem{thm}{Theorem}[section]
\newtheorem{lemma}[thm]{Lemma}
\newtheorem{prop}[thm]{Proposition}
\newtheorem{cor}[thm]{Corollary}
\newtheorem{defn}[thm]{Definition}

\theoremstyle{definition}
\newtheorem{example}[thm]{Example}
\newtheorem{remark}[thm]{Remark}

\newcommand{\btheorem}{\begin{thm}}
    \newcommand{\etheorem}{\end{thm}}
\newcommand{\bproposition}{\begin{prop}}
    \newcommand{\eproposition}{\end{prop}}
\newcommand{\bdefinition}{\begin{defn}}
    \newcommand{\edefinition}{\end{defn}}
\newcommand{\bcorollary}{\begin{cor}}
    \newcommand{\ecorollary}{\end{cor}}
\newcommand{\bproof}{\begin{proof}}
    \newcommand{\eproof}{\end{proof}}
\newcommand{\bremark}{\begin{remark}}
    \newcommand{\eremark}{\end{remark}}
\newcommand{\eexample}{\end{example}}
\newcommand{\bexample}{\begin{example}}

\newcommand{\elemma}{\end{lemma}}
\newcommand{\blemma}{\begin{lemma}}

\newcommand{\sq}{\sqrt{-1}}

\newcommand{\p}{\partial}

\renewcommand{\bar}{\overline}

\renewcommand{\phi}{\varphi}

\newcommand{\beq}{\begin{equation}}
\newcommand{\eeq}{\end{equation}}
\newcommand{\ee}{\end{eqnarray*}}
\newcommand{\be}{\begin{eqnarray*}}

\newcommand{\bd}{\begin{enumerate}}
    \newcommand{\ed}{\end{enumerate}}

\renewcommand{\tilde}{\widetilde}

\newcommand{\qtq}[1]{\quad\mbox{#1}\quad}
\renewcommand{\bp}{\bar{\partial}}

\newcommand{\ts}{\otimes}

\renewcommand{\>}{\rightarrow}

\newcommand{\C}{{\mathbb C}}

\newcommand{\R}{{\mathbb R}}

\newcommand{\LL}{\left\langle}
\newcommand{\RL}{\right\rangle}

\renewcommand{\>}{\rightarrow}

\renewcommand{\p}{{\partial}}
\renewcommand{\bp}{{\bar{\partial}}}

\newcommand{\vone}{ \vskip 1\baselineskip}
\newcommand{\om}{\omega}

\renewcommand{\bar}{\overline}
\renewcommand{\tilde}{\widetilde}

\newcommand{\smo}{\sqrt{-1}}

\newcommand{\tr}{\mathrm{tr}}

\renewcommand{\id}{\mathrm{Id}}
\newcommand{\Herm}{\mathrm{Herm}}
\setlist[itemize]{leftmargin=*}
\setlist[enumerate]{leftmargin=*}

\numberwithin{equation}{section} 

\makeatletter

\usepackage{fancyhdr}
\title{The prescribed Hermitian-Yang-Mills flow II}

\author{Zhiyao Xiong}
\author{Xiaokui Yang}
\author{Shing-Tung Yau}

\address{Zhiyao Xiong, Department of Mathematics, Tsinghua University, Beijing, 100084, China}
\email{xiongzy22@mails.tsinghua.edu.cn}

\address{Xiaokui Yang, Department of Mathematics and Yau Mathematical Sciences Center, Tsinghua University, Beijing, 100084, China}
\email{xkyang@mail.tsinghua.edu.cn}

\address{Shing-Tung Yau, Yau Mathematical Sciences Center and  Qiuzhen College, Tsinghua University, Beijing, 100084, China}
\email{styau@mail.tsinghua.edu.cn}

\begin{document}

    \begin{abstract} We prove an analogue of the classical Donaldson–Uhlenbeck–Yau theorem by using  the prescribed Hermitian–Yang–Mills flow.    Let $E$ be a holomorphic vector bundle over a compact K\"ahler manifold $(M,\om_g)$.
        Suppose that for every proper coherent subsheaf $\cF\subset E$, the following inequality holds:
    $$
        \deg_{\omega_g}(\cF)<\deg_{\omega_g}(E).
    $$
        Then, for any initial Hermitian metric $h_0$ on $E$ and any  positive-definite Hermitian tensor $P\in \Gamma(M,E^*\ts \overline E^*)$, the prescribed Hermitian-Yang-Mills flow
    $$  \ \frac{\partial h}{\partial t} = -\Lambda_{\om_g}\left(\sqrt{-1}\, R^h\right) + P,
    $$
         admits a global smooth solution on $[0,\infty)$. Moreover, as $t\>\infty$, the flow converges smoothly  to a Hermitian metric  $h_\infty$ on $E$ satisfying
    $$
        \Lambda_{\om_g}\left(\sqrt{-1}\, R^{h_\infty}\right) = P.
    $$
As an application,  we establish that on a Fano  manifold $M$,  for any Hermitian metric form $\omega$ and any positive-definite Hermitian  tensor  $P\in\Gamma(M,T^{*1,0}M\ts T^{*0,1}M)$, there exists a unique Hermitian metric tensor  $h$ on $T^{1,0}M$ such that 
$$ \Lambda_\omega\left(\sq R^h\right)=P.$$ This may be viewed as an analogue of the Calabi–Yau theorem for Fano manifolds.

    \end{abstract}
    \maketitle {
    \setcounter{tocdepth}{1}

    {\small{    \begin{spacing}{1.1} \tableofcontents %
                \dottedcontents{section}[1.8cm]{}{3em}{5pt} %
\end{spacing} }} }

\vspace*{-1\baselineskip}

\section{Introduction}

This paper continues our previous work \cite{XYY26+}, where we introduced the prescribed Hermitian--Yang--Mills flow
 and established its convergence under various \textbf{differential-geometric} conditions, with applications to solving general prescribed Hermitian--Yang--Mills tensor equations. Here, building on these results, we prove convergence of the flow under \textbf{algebraic-geometric} conditions and thereby obtain solutions to the prescribed Hermitian--Yang--Mills tensor equations in this new setting.

As discussed in our previous works \cite{WYY26+, FWYY26+, WYY26b+, XYY26+}, the prescribed Hermitian-Yang-Mills tensor equation
\beq \Lambda_{\om_g}\left(\sqrt{-1}\, R^h\right) =P\eeq
 serves as a vector bundle analogue of the Calabi–Yau theorem (\cite{Cal57,Yau78}) for prescribed Ricci curvatures. This equation is  closely related to the Hermitian-Einstein equation and extends the classical Hermitian-Einstein existence theory to the setting of fully prescribed Hermitian-Yang-Mills curvature tensors. We anticipate that this equation will emerge as a crucial bridge between the geometry of vector bundles and the broader landscape of K\"ahler and Hermitian geometry.\\

 Let's recall the classical Donaldson--Uhlenbeck--Yau theorem (\cite{Don85, UY86, Don87}) for stable vector bundles (see also the Hermitian analogue in \cite{LY86}):  \setcounter{theorem}{0}
\renewcommand{\thetheorem}{\Alph{theorem}}\begin{theorem} \label{DUY} For  a stable holomorphic vector bundle $E$ over a compact K\"ahler manifold $(M,\omega_g)$,  there exists a unique Hermitian-Einstein metric $h$ on $E$ up to scaling, satisfying
    \beq \Lambda_{\omega_g} \left(\sq R^h\right)=\lambda_E\cdot h, \label{HE} \eeq
    where $R^h\in\Gamma(M,\Lambda^{1,1}T^*M\ts E^*\ts \bar E^*)$ is the Chern curvature tensor of $(E,h)$, and
\beq
\lambda_E=\frac{2\pi n\int_M c_1(E)\wedge
	\omega_g^{n-1}}{\mathrm{\rk}(E)\int_M\omega_g^n}.\eeq  \end{theorem}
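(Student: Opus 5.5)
Theorem~\ref{DUY} is the classical Donaldson--Uhlenbeck--Yau theorem, and I would prove it along Donaldson's heat-flow route, arguing by contradiction with Uhlenbeck--Yau's construction of a destabilizing subsheaf.

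\emph{Setup.} Fix a background metric $h_0$ on $E$, conformally normalized so that $\tr\bigl(\Lambda_{\omega_g}\sq R^{h_0}-\lambda_E\,\id\bigr)=0$. This is possible by solving a Poisson equation for $\log\det$. Then run the Hermitian--Yang--Mills flow
\[ h^{-1}\frac{\partial h}{\partial t}=-\bigl(\Lambda_{\omega_g}\sq R^{h}-\lambda_E\,\id\bigr). \]

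\emph{Long-time existence.} The flow is strictly parabolic modulo gauge, so short-time existence is standard. Two estimates then give long-time existence:
\begin{itemize}
\item the quantity $|\Lambda F_h-\lambda_E\id|^2_h$ is a subsolution of the heat equation, so the maximum principle bounds it uniformly in time;
\item $\det h$ stays fixed by the normalization.
\end{itemize}
Writing $h=h_0e^{s}$, a $C^0$ bound on $s$ on each finite interval yields $C^1$ bounds. These come either by a Donaldson-type blow-up argument or by the Calabi-type third-order estimate. Higher regularity then follows by bootstrapping, so the flow exists on $[0,\infty)$.

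\emph{Convergence.} Donaldson's functional $\mathcal M(h_0,h)$ is monotone decreasing along the flow, with derivative $-\|\Lambda F_h-\lambda_E\id\|^2_{L^2}$. I would prove the key inequality
\[ \sup_M|s|\le C_1+C_2\,\mathcal M(h_0,h), \]
using the relation $\Delta\log\tr e^{s}\ge -C$ and a mean-value inequality. This reduces the problem to a uniform $C^0$ bound on $s(t)$. Given that bound, one obtains $C^1$ and higher bounds as before, subsequential smooth convergence, and the limit is Hermitian--Einstein because $\int_0^\infty\|\Lambda F_h-\lambda_E\id\|^2\,dt<\infty$.

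\emph{Main obstacle: the $C^0$ bound from stability.} Suppose $\|s_t\|_{L^2}\to\infty$ along a sequence. Set $u_t=s_t/\|s_t\|_{L^2}$. Uhlenbeck--Yau's inequality gives, for $\Psi(x,y)=(e^{y-x}-1)/(y-x)$,
\[ \int\tr\bigl(u\,\Lambda F_{h_0}\bigr)+\int\bigl\langle\Psi(u)(\bp u),\bp u\bigr\rangle\le 0 \]
in the limit. This yields an $L^2_1$-bounded limit $u_\infty\ne0$ with constant eigenvalues. The spectral projections $\pi_\alpha$ of $u_\infty$ are then weakly holomorphic: $\pi\in L^2_1$ with $(\id-\pi)\bp\pi=0$. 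By Uhlenbeck--Yau's regularity theorem, these define coherent subsheaves $\mathcal F_\alpha\subset E$. The Chern--Weil formula, which expresses $\deg\mathcal F_\alpha$ via $\int\tr(\pi_\alpha\Lambda F_{h_0})-\|\bp\pi_\alpha\|^2$, combined with the inequality above, shows that some $\mathcal F_\alpha$ has slope $\ge\mu(E)$. This contradicts stability.

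I expect this regularity step, namely that weakly holomorphic projections give coherent subsheaves, to be the hardest part. I would simply invoke \cite{UY86} for it.

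Finally, uniqueness up to scaling follows because for two Hermitian--Einstein metrics the endomorphism $h_1^{-1}h_2$ is parallel. Hence it is a holomorphic endomorphism of a stable bundle, and such endomorphisms are multiples of the identity.
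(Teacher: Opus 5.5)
The paper does not prove Theorem~\ref{DUY}. It is recalled as classical background from \cite{Don85, UY86, Don87}, so there is no argument in the paper to check yours against. Your outline is the standard Donaldson-flow proof, with Simpson's $C^0$ estimate \cite{Sim88} and Uhlenbeck--Yau's regularity theorem. It is sound as a sketch except for one misstated step, and that step, as written, does not support what you derive from it.

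\textbf{The misstated step.} The limiting inequality cannot be taken with the fixed kernel $\Psi(x,y)=(e^{y-x}-1)/(y-x)$ evaluated on $u$.
\begin{itemize}
\item That kernel is bounded on the bounded spectrum of $u$ (e.g.\ $\Psi(x,x)=1$). So the inequality only bounds $u$ in $L^2_1$; it cannot force the eigenvalues of $u_\infty$ to be constant or give $(\id-\pi_\alpha)\bp\pi_\alpha=0$.
\item Along the flow, the left-hand side $\int_M\tr\bigl(s\,(\Lambda F_{h_t}-\lambda_E\id)\bigr)$ of Uhlenbeck--Yau's identity is only known to be $O(\|s\|)$, not $o(\|s\|)$. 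So that identity does not by itself give a sign.
\end{itemize}
The sign should come from $\mathcal M(h_0,h_t)\le 0$. Writing $s=lu$ with $l=\|s\|$, Donaldson's formula gives
\[
\int_M\tr(u\,\Lambda F_{h_0})+\int_M\bigl\langle l\,\Psi_{\mathcal M}(lu)(\bp u),\bp u\bigr\rangle\le 0,
\qquad
\Psi_{\mathcal M}(x,y)=\frac{e^{y-x}-(y-x)-1}{(y-x)^2}.
\]
Here $l\,\Psi_{\mathcal M}(lx,ly)$ increases to $1/(x-y)$ on $\{x>y\}$ and to $+\infty$ on $\{x\le y\}$. In the limit you obtain the inequality for every smooth positive $\Phi$ with $\Phi(x,y)<1/(x-y)$ for $x>y$. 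It is the freedom to take $\Phi$ arbitrarily large on $\{x\le y\}$ that yields constant eigenvalues and weak holomorphicity of the spectral projections.

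Also keep $\tr u_\infty=0$ (from $\det h=\det h_0$) explicitly in view. It ensures $u_\infty$ is not a multiple of $\id$, and it converts the limiting inequality into
\[
\sum_\alpha(\lambda_{\alpha+1}-\lambda_\alpha)\,\rk(\cF_\alpha)\bigl(\mu(E)-\mu(\cF_\alpha)\bigr)\le 0 .
\]

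\textbf{Comparison with the paper's analogue.} The paper's proof of Theorem~\ref{main} follows the same skeleton but uses no functional, since $\p_t h=-S^h+P$ is not known to be a gradient flow.
\begin{itemize}
\item It obtains only the one-sided bound $h_t\ge Ch_0$, from the comparison principle (Proposition~\ref{prop uniform Ric2 bound}), and normalizes by $\Lambda_t=\sup_M\lambda_{\max}(H_t)$.
\item It replaces $\mathcal M\le 0$ by a time-selection argument (Theorem~\ref{prop key estimate for degree comparison}). At running-maximum times $t_i$, the integrability $D_\sigma\in L^2(0,\infty)$ coming from \eqref{ev-theta} yields $\limsup_i\int_M\tr\bigl(\Lambda_{\om_g}(\sq\Theta^{h_{t_i}})\tilde H_{t_i}^\sigma\bigr)\om_g^n\le 0$. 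This is exactly the Uhlenbeck--Yau-type sign you lack along the flow.
\item It then uses Uhlenbeck--Yau's powers $\tilde H^{\sigma_j}$ with $\sigma_j\to 0$ to extract a single projection, rather than constant eigenvalues and a filtration.
\item Positivity of $P$ kills the term $\tr(Ph_t^{-1}\tilde H_t^\sigma)$ (Theorem~\ref{thm oscillation}). Hence one subsheaf with $\deg\cF\ge\deg E$ suffices.
\end{itemize}
That route handles a non-variational flow on Gauduchon manifolds. However, it uses $P>0$ essentially, both for the lower barrier and for the vanishing term, so it does not by itself recover Theorem~\ref{DUY}. Your functional-based route is what delivers slope stability with the Einstein constant $\lambda_E$.
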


\noindent Slope stability is a fundamental algebraic condition:  for every non-zero proper coherent subsheaf $\mathcal{F} \subset E$, the following inequality holds:
\beq 
\frac{\deg_{\omega_g}(\mathcal{F})}{\operatorname{rk}(\mathcal{F})} < \frac{\deg_{\omega_g}(E)}{\operatorname{rk}(E)}.
\label{stable condition}
\eeq 
Here the degree is  defined by
\beq 
\deg_{\omega_g}(\mathcal{F}) = 2\pi n \int_M c^{\mathrm{BC}}_1(\mathcal{F}) \wedge \omega_g^{n-1},
\eeq 
where $c^{\mathrm{BC}}_1(\cF)$ denotes the first Bott--Chern class in $H^{1,1}_{\mathrm{BC}}(M, \mathbb{C})$, coinciding with the usual first Chern class $c_1(\cF)$ when $M$ is K\"ahler. We refer to \cite{Kob87} for more details. 
 Stable bundles and Hermitian-Einstein metrics have become a central topic in differential geometry and mathematical physics; see, for instance, \cite{NS65, Siu87, Sim88, Sim92, BGP96, Li00, DW04, Jac14, LZ15, JW18, Fu19, CS20, DS21, Li21, Wu23, CW24, MPS25}.

 \vskip 1\baselineskip

The first main result of this paper is an analogue of Theorem~\ref{DUY},
established via the prescribed Hermitian-Yang-Mills flow
\begin{equation}
\label{PHYMF}
\begin{cases}
\displaystyle \frac{\partial h}{\partial t}
= -\Lambda_{\omega_g}\!\left(\sqrt{-1}\, R^h\right) + P, \\[10pt]
h(0) = h_0.
\end{cases}
\end{equation}
We solve the prescribed Hermitian-Yang-Mills tensor equation under natural algebraic-geometric conditions, in contrast to earlier differential-geometric assumptions.
\btheorem \label{main}
Let $(M,\omega_g)$ be a compact Hermitian manifold with
$\partial\bar{\partial}\omega_g^{n-1}=0$,
and let $E\to M$ be a holomorphic vector bundle. Suppose that for every  proper coherent subsheaf
$\mathcal{F}\subset E$, the inequality
\begin{equation}
\label{sheaf condition}
\deg_{\omega_g}(\mathcal{F}) < \deg_{\omega_g}(E)
\end{equation}
holds. Then, for any initial Hermitian metric $h_0$ on $E$
and any positive-definite Hermitian tensor
$P\in\Gamma(M,E^*\otimes\overline{E}^*)$,
the prescribed Hermitian–Yang–Mills flow \eqref{PHYMF}
admits a unique global smooth solution on $[0,\infty)$.
Moreover, $h(t)$ converges smoothly as $t\to\infty$
to a Hermitian metric $h_\infty$ on $E$ satisfying
\begin{equation}
\Lambda_{\omega_g}\left(\sqrt{-1}\,R^{h_\infty}\right)=P.
\end{equation}
\etheorem

\noindent By Serre duality, the corresponding statements of Theorem~\ref{main} remain valid for negative bundles; see \cite{WYY26+}.  We  outline the proof of Theorem~\ref{main}. The central ingredient is a uniform $C^0$-estimate for $h_t$. By applying the parabolic comparison principle established in our previous paper \cite{XYY26+}, we obtain the uniform lower bound
\[
C_1 h_0 \leq h_t.
\]
Conversely, a uniform upper bound $h_t \leq C_2 h_0$ would imply convergence of the flow.  It therefore remains to rule out the case of unbounded metrics. Assuming no such uniform bound exists, we combine several a priori estimates along the flow \eqref{PHYMF} with the method of Uhlenbeck--Yau to extract a convergent subsequence of a normalization $\tilde H_t$ of Hermitian matrices $H_t = h_t \cdot h_0^{-1}$. This limiting process yields a proper coherent subsheaf $\mathcal{F} \subset E$ satisfying
$$
\deg_{\omega_g}(\mathcal{F})\geq \deg_{\omega_g}(E),
$$
contradicting condition \eqref{sheaf condition}.\\

It is well-known that on a compact Hermitian manifold $(M,\omega_g)$  with
$\partial\bar{\partial}\omega_g^{n-1}=0$,  if   there exists a Hermitian  metric $h$ on $E$ such that
$$\Lambda_{\om_g}\left(\sqrt{-1}\, R^{h}\right) >0,$$
then the condition \eqref{sheaf condition} is satisfied. As an
immediate consequence of Theorem~\ref{main} and
\cite[Theorem~1.2]{XYY26+}, we obtain: \bcorollary \label{main2} Let
$(M,\om_g)$ be a compact Hermitian manifold with
$\p\bp\omega_g^{n-1}=0$, and $E$ be a holomorphic vector bundle over
$M$. The following are equivalent: \bd \item For every proper
coherent subsheaf $\cF\subset E$, the following inequality holds:
$$
\deg_{\omega_g}(\cF)<\deg_{\omega_g}(E).
$$
\item For  any positive (resp. quasi-positive) Hermitian tensor $P\in \Gamma(M,E^*\ts \overline E^*)$,  there exists a Hermitian metric  $h$ on $E$ satisfying
$$
\Lambda_{\om_g}\left(\sqrt{-1}\, R^{h}\right) = P.
$$
\ed
\ecorollary

 Moreover, Theorem \ref{main} admits the following algebro-geometric formulation. As a consequence, we obtain a criterion for the solvability of the prescribed Hermitian-Yang-Mills tensor equation with respect to an arbitrary Hermitian metric form $\omega$ on $M$:
\btheorem\label{main4} Let $E$ be a holomorphic vector bundle over a compact complex manifold $M$. The following are equivalent.
\bd \item For any nonzero coherent quotient sheaf $\cQ$ of $E$, the determinant line bundle $\det\cQ=:\det(\cQ^{**})$ is pseudo-effective but not unitary flat.

\item For any Hermitian metric $\omega$ on $M$,  and any (quasi-)positive-definite Hermitian tensor $P\in \Gamma(M,E^*\ts \bar E^*)$, there exists a unique Hermitian metric $h$ on $E$ such that 
$$\Lambda_\omega\left(\sq R^h\right)=P.$$
\ed 

\etheorem

\noindent Condition $(1)$ in Theorem \ref{main4} is commonly encountered in algebraic geometry.  It is satisfied, for example, if
\bd \item $E$ is ample (\cite{Har66});
\item  $E$ is  strictly nef and $M$ is a weak Fano manifold (\cite{LOY19, LOY21, LOY24}).
\ed 
Combining Theorem \ref{main4} with  recent results of W.-H. Ou \cite{Ou23}, we obtain existence of solutions to the prescribed Hermitian–Yang–Mills tensor equation on Fano manifolds and certain rationally connected manifolds. This may be viewed as a Fano-manifold analogue of the classical Calabi-Yau theorem.
\btheorem\label{main5} Let $M$ be 
 a rationally connected projective  manifold with $-K_M$ nef.
 Then for any Hermitian metric form $\omega$ on $M$, and for any (quasi-)positive-definite Hermitian tensor  $P\in \Gamma\left(M,T^{*1,0}M\ts T^{*0,1}M\right)$, there exists a unique Hermitian metric  $h$ on $T^{1,0}M$ such that 
\beq \Lambda_\omega\left(\sq R^h\right)=P.\eeq
In particular, the same conclusion holds if $M$ is a projective manifold with $-K_M$ strictly nef, or if $M$ is a Fano manifold.
\etheorem

\noindent 
Building on Theorem \ref{main}, \cite[Corollary~1.5]{Yang18}, \cite[Theorem~1.2]{XYY26+}, and the recent result of Li-Zhang-Zhang \cite[Theorem~1.7]{LZZ25}, we obtain:

\bcorollary\label{main7}  Let $M$ be a compact K\"ahler manifold. Then the following are equivalent:

\bd \item $M$ is a rationally connected projective manifold. 

\item there exists a Hermitian metric form $\omega$ on $M$ and a  Hermitian metric tensor $h_0$ on $T^{1,0}M$ such that $$ \Lambda_\omega\left(\sq R^{h_0}\right)>0.$$
\item there exists a Hermitian metric form $\omega$ on $M$ such that:  for any (quasi-)positive Hermitian tensor  $P\in \Gamma\left(M,T^{*1,0}M\ts  T^{*0,1}M\right)$, there is a unique Hermitian metric tensor $h$ on $T^{1,0}M$ such that 
\beq \Lambda_\omega\left(\sq R^h\right)=P.\label{PHYM}\eeq 
\ed 
\ecorollary
\noindent 
Let $M$ be a rationally connected projective manifold, 
and let $\tilde{M} \to M$ be a sufficiently high blow-up. Then there exists a K\"ahler metric $\omega$ on $\tilde M$ such that for any  positive Hermitian tensor  $P\in \Gamma\left(\tilde M,T^{*1,0}\tilde M\ts  T^{*0,1}\tilde M\right)$, the equation \eqref{PHYM} is not solvable. This highlights a key difference between Theorem~\ref{main5} and Corollary~\ref{main7}. \\

\noindent 
It is well-known (e.g., \cite{Har66}) that ample vector bundles satisfy the condition \eqref{sheaf condition}. Consequently, we obtain:
\bcorollary\label{main3} Let $E$ be an ample vector bundle over a compact complex manifold $M$. Then   for any (quasi-)positive-definite Hermitian tensor $P\in \Gamma(M,E^*\ts \overline E^*)$ and for any Hermitian metric $\omega$ on $M$,  there exists a unique Hermitian metric  $h$ on $E$ satisfying
$$
\Lambda_{\om}\left(\sqrt{-1}\, R^{h}\right) = P.
$$
In particular, every ample vector bundle $E$ over a compact Riemann surface $M$ admits a  Hermitian metric with  Griffiths-positive curvature of the prescribed form $\omega\ts P$, where $\omega$ is a K\"ahler metric on $M$ and $P$ is a positive-definite Hermitian tensor in $\Gamma(M, E^*\ts \bar E^*)$.
\ecorollary

\noindent 
It is also well-known that strictly nef vector bundles arise as extensions of ample bundles. Thanks to \cite[Theorem~1.5]{LOY21} and \cite[Proposition~2.6]{LOY21}, we obtain:

\bcorollary\label{main6}  Let $E$ be a strictly nef vector bundle over a Fano manifold $M$. Then   for any (quasi-)positive-definite Hermitian tensor $P\in \Gamma(M,E^*\ts \overline E^*)$ and for any Hermitian metric $\omega$ on $M$,  there exists a unique Hermitian metric  $h$ on $E$ satisfying
$$
\Lambda_{\om}\left(\sqrt{-1}\, R^{h}\right) = P.
$$
\ecorollary

\bremark  The Griffiths conjecture asserts that every ample vector bundle over a compact complex manifold admits a Griffiths-positive Hermitian metric. It is the vector bundle analogue of Kodaira's embedding theorem, which states that any ample line bundle admits a smooth metric with positive curvature. The conjecture was settled for compact Riemann surfaces through several independent approaches:
\begin{itemize}
    \item Campana and Flenner \cite{CF90} proved it using base-change techniques for algebraic curves (see also \cite{Ume73}).

    \item Li, Zhang, and Zhang \cite{LZZ25} established the result by employing the refined continuity method of Uhlenbeck-Yau \cite{UY86}. They established that the existence of a Hermitian metric with positive Hermitian-Yang-Mills tensor is equivalent to a condition analogous to \eqref{sheaf condition}, which is a special case of Corollary \ref{main2}.

    \item Recently, Murakami \cite{Mur26} provided an alternative proof utilizing the Demailly system proposed in \cite{Dem21}. See also the approaches in \cite{Pin21}.

    \item Our proof provides a complete picture of the Griffiths-positive curvature tensor by constructing a  Hermitian metric with prescribed curvature.
\end{itemize}
 The Griffiths conjecture has also been investigated in greater generality; see, e.g., Berndtsson \cite{Ber09}, as well as Liu, Sun, and the second-named author \cite{LSY13}.  More recently, an RC‑positivity analogue of the Griffiths conjecture was proposed in \cite{Yang18} and  \cite{Yang20}, which also serves as the primary motivation for this series of works.
\eremark

\bremark 
The strategy employed in the proof of Theorem \ref{main} carries over, with appropriate changes, to the twisted Hermitian-Yang-Mills flow:
\beq \frac{\partial h}{\partial t} = -\Lambda_{\omega_g}\left(\sqrt{-1}\, R^h\right) +\lambda h+P. \eeq
This flow approach generalizes  the results in \cite{WYY26b+}.
\eremark
  \noindent\textbf{Acknowledgements}. The second-named author would like to thank Huai-Dong Cao,  Bing-Long Chen, Jixiang Fu, Jie Liu, Kefeng Liu, Wenhao Ou and Valentino Tosatti  for inspiring  discussions.

\vskip 2\baselineskip

\section{Background materials}

In this section, we recall some background material and key results from our previous work \cite{XYY26+} for the reader's convenience.
Let $(M,\omega_g)$ be a compact Hermitian manifold of dimension $n$.
We say that $(M,\omega_g)$ is a \emph{Gauduchon manifold} if
$
\partial\bar{\partial}\,\omega_g^{\,n-1}=0.
$
It is well known that, for any Hermitian metric $\omega$ on $M$,
there exists a smooth function $f$ such that
$
\omega_g = e^{f}\omega
$
satisfies $\partial\bar{\partial}\,\omega_g^{\,n-1}=0$ (\cite{Gau84}). Let $E$ be a holomorphic vector bundle over $M$ of rank $r$. We use the following notations:

\begin{itemize}
    \item $\mathrm{Herm}(E)$: the space of Hermitian tensors in
    $\Gamma(M, E^*\otimes \bar{E}^*)$;

    \item $\Herm^+(E)$: the subspace consisting of
    positive-definite Hermitian tensors;

    \item $\Herm^{\ge 0}(E)$: the subspace consisting of
    non-negative Hermitian tensors.
\end{itemize}
Given a smooth Hermitian metric $h$ on $E$, the Chern connection of
$(E,h)$ is denoted by $\nabla^{h}$. We shall use the natural
decomposition $ \nabla^{h}=\p^{h}+\bp$, where $\p^{h}$ is the
$(1,0)$ part and $\bp$ is the $(0,1)$ part. Let $R^{h}$ be the Chern
curvature tensor of $(E,h)$. In local holomorphic coordinates
$\{z^i\}$ of $M$ and local holomorphic basis $\{e_\alpha\}$ of $E$,
\beq R^h=R_{i\bar j\alpha\bar\beta}dz^i\wedge d\bar z^j\ts
e^\alpha\ts\bar{e}^\beta \in\Gamma\left(M,\wedge^{1,1}T^*M\ts
E^*\ts\bar{E}^*\right). \eeq  The \textbf{Hermitian-Yang-Mills tensor} $ S^{h}\in
\Gamma(M,E^*\ts \bar E^*)$ of $(E,h)$  is defined as \beq S^{h}
:=\Lambda_{\omega_g}\left(\smo R^{h}\right)=\left(g^{i\bar
    j}R^{h}_{i\bar j\alpha\bar\beta}\right) e^\alpha\ts \bar e^\beta \in
\Gamma(M,E^*\ts \bar E^*).
\eeq  By using the Hermitian $h$, there
are natural lifts of $R^h$ and $S^h$: \beq\Theta^h=R^h\cdot
h^{-1}=R_{i\bar j\alpha}^{\beta}dz^i\wedge d\bar z^j\ts
e^\alpha\ts{e}_\beta\in \Gamma(M,\Lambda^{1,1}T^*M\ts E^*\ts E),\eeq
and \beq   K^h=S^h\cdot h^{-1}=\left(g^{i\bar j}R_{i\bar
    j\alpha}^{\beta}\right) e^\alpha\ts  e_\beta\in \Gamma(M,E^*\ts E).
\eeq
The Chern scalar curvature $s_h$ is defined as
\beq s_h=\mathrm{tr}_E K^h=\mathrm{tr}_h S^h=g^{i\bar j}h^{\alpha\bar\beta}R_{i\bar j \alpha\bar\beta}^h.\eeq

\noindent The following transformation formula for the
Hermitian-Yang-Mills curvature tensors is essentially well-known
(e.g. \cite[Proposition~3.6]{WYY26+})
\blemma\label{lem Theta formula} Let $(M,\omega_g)$ be
a compact  Hermitian manifold and $E\>M$ be a holomorphic vector
bundle. If $h$ and $h_0$ are smooth Hermitian metrics on $E$, then
\beq \Lambda_{\omega_g}\left(\sq  \Theta^{h}\right)-
\Lambda_{\omega_g}\left(\sq  \Theta^{h_0}\right)=
\Lambda_{\omega_g}\smo\bp\left( (\p^{h_0} H) \cdot H^{-1}
\right)\label{conformalchange1} \eeq as tensors in $\Gamma(M,E^*\ts
E)$ and $H=h\cdot h_0^{-1}$.
\elemma
 The following results were established in \cite[Section~2]{XYY26+}, and we quote them without proof.
\blemma\label{lem tilde S evolution}The evolution equation of $|\tilde S|^2_h$ along the flow \eqref{PHYMF} is
\beq
\left(\frac{\p }{\p t}-\Delta_\C \right)|\tilde S|_{h}^2
=-2\tilde S_{\alpha\bar\beta}  P_{\gamma\bar\delta}\tilde S_{\lambda\bar \mu} h^{\alpha\bar \mu}  h^{\lambda \bar \delta} h^{\gamma\bar\beta}
- \left|\nabla \tilde S\right|_{g,h}^2,
\label{tilde S evolution}
\eeq
where $\tilde S =S^h-P$, $\Delta_{\C}\bullet =\Lambda_{\omega_g}\left(\sq \p\bp \bullet\right) $ and  $\nabla$ denotes the Chern connection on $E^*\ts \bar E^*$ induced by $(E,h)$.
\elemma

\bcorollary\label{cor tilde S inequality} The following estimate holds:
\beq
\left(\frac{\p }{\p t}-\Delta_\C \right)|\tilde S|_{h}^2 \leq -2 \lambda^P_{\min}\cdot |\tilde S|_{h}^2,
\eeq
where
\beq \lambda^P_{\min}(x,t):=\inf_{v \in E_x, v\neq 0}  \frac{ P(v, \bar v) }{h(t)(v,\bar v)}.\eeq

\ecorollary

\noindent A crucial ingredient in the proof of Theorem \ref{thm finite time extension} is the higher-order estimate for the Hermitian metrics $h_t$ established below.

\btheorem\label{parabolicbootstrap}
Let $(M,\omega_g)$ be a compact Hermitian manifold, and $E$ be a holomorphic vector bundle over $M$.
Suppose that  $h_t$, $0\leq t<T\leq \infty$, is a smooth solution to the flow \eqref{PHYMF}. If there exist constants $C_1\geq 1$ and $C_2>0$ such that
\beq
C_1^{-1}h_0\leq h_t\leq C_1h_0,
\qquad
\sup_M\left|S^{h_t}-P\right|_{h_t}\leq C_2
\eeq
for all $t\in[0,T)$, then for every $\tau\in (0, T)$ and every integer $m\geq 0$, there exists a positive constant $C_{m,\tau}=C_{m,\tau}(\omega_g,h_0,P,C_1,C_2)$ such that
\beq
\left\|h_t\right\|_{C^{m}(M,\omega_g, h_0)}\leq C_{m,\tau},
\qquad t\in[\tau,T).
\label{uniform parabolic bootstrapping estimate}
\eeq
\etheorem

\btheorem\label{thm finite time extension}
Let $(M,\omega_g)$ be a compact Hermitian manifold, and $E$ be a holomorphic vector bundle over $M$.
Suppose that  $h_t$, $0\leq t<T<\infty$, is a smooth solution to the prescribed Hermitian-Yang-Mills flow \eqref{PHYMF}. If there exist constants $C_1$ and $C_2$ such that
\beq
C_1^{-1}h_0\leq h_t\leq C_1h_0,
\qquad
\sup_M\left|S^{h_t}-P\right|_{h_t}\leq C_2
\label{finite extension assumptions}
\eeq
for all $t\in[0,T)$, then $\{h_t\}$ converges in $C^\infty$ to a smooth Hermitian metric $h_T$ as $t\to T$. In particular, the flow \eqref{PHYMF} extends smoothly beyond $T$.
\etheorem

\noindent
The following result is \cite[Theorem~1.1]{XYY26+}:
\btheorem\label{comparison} Let $(M,\om_g)$ be a compact Hermitian manifold, and  $E$ be a holomorphic vector bundle on $M$. Suppose that $\underline h(t)$,   $h(t)$ and $\overline h(t)$ are smooth time-dependent Hermitian metrics on $E$ defined for $t\in [0,T)$ with $T\leq \infty$.  \bd \item[(1)]  If  $\overline h(0)\geq h(0)$ and
\beq \frac{\p \overline h}{\p t}-\frac{\p h}{\p t}
\geq S^{h}-S^{\overline h},\eeq
then for any $t\in [0, T)$, one has \beq \overline h(t)\geq h(t).\eeq
\item[(1')] If  $ \underline h(0)\leq h(0)$ and
\beq \frac{\p \underline h}{\p t}-\frac{\p h}{\p t}
\leq S^{h}-S^{\underline h},\eeq
then for any $t\in [0, T)$, one has \beq  \underline h(t)\leq h(t).\eeq
\ed

\etheorem
\vskip 2\baselineskip

\section{Convergence under the condition $h_t\leq Ch_0$}

In this section, we establish several general results concerning the prescribed Hermitian–Yang–Mills flow \eqref{PHYMF}.
\bproposition\label{prop non-negative P long time existence}
Let  $(E,h_0)$ be a Hermitian vector bundle over a compact Hermitian manifold
$(M,\omega_g)$.  For any $P\in \mathrm{Herm}^{\geq 0}(E)$, the prescribed Hermitian-Yang-Mills flow \eqref{PHYMF} exists for all $t\in[0,\infty)$.
\eproposition

\bproof
Let $[0,T_{\max})$ be the maximal existence interval of the flow \eqref{PHYMF}. Let
\beq
u=|S^{h_t}-P|_{h_t}^2.
\eeq
Since $P\geq 0$, Corollary~\ref{cor tilde S inequality} gives
\beq
\left(\frac{\p}{\p t}-\Delta_\C\right)u\leq0.
\eeq
By the parabolic maximum principle, for any $t\in [0,T_{\max})$,
\beq
\sup_M|S^{h_t}-P|_{h_t}\leq C_4,
\label{uniform theta bound proposition new observation}
\eeq
Here $C_4=C_4(\om_g,h_0,P)=\sup_M|S^{h_0}-P|_{h_0}$.  Since $\p_t h_t=-(S^{h_t}-P)$, for every $x\in M$ and $ v\in E_x\setminus\{0\}$,
\beq
\left|\frac{d}{dt}\log h_t(v,\bar v)\right|
\leq C_4.
\eeq
If $T_{\max}<\infty$, then
\beq
e^{-C_4T_{\max}}h_0\leq h_t\leq e^{C_4T_{\max}}h_0
\quad\text{on }[0,T_{\max}).
\eeq
Together with \eqref{uniform theta bound proposition new observation}, Theorem~\ref{thm finite time extension} extends the solution beyond $T_{\max}$, a contradiction. Hence $T_{\max}=\infty$.
\eproof

\bproposition\label{prop uniform Ric2 bound}
Let  $(E,h_0)$ be a Hermitian vector bundle over a compact Hermitian manifold
$(M,\omega_g)$ and $P\in \mathrm{Herm}^{+}(E)$. Then there exists a constant $C=C(\om_g,h_0,P)>0$ such that for all $t\geq 0$, the following estimates hold along the flow \eqref{PHYMF}:
\beq
C h_0\leq h_t,
 \qtq{and}
\sup_M\left| \Lambda_{\om_g}\left(\sq\Theta^{h_t}\right)\right|_{h_t} \leq C.
\eeq
\eproposition

\bproof
Choose $C_1=C_1(\om_g,h_0,P)\in(0,1)$ so small that
$C_1S^{h_0}<P$.  The stationary metric $\ell_t\equiv C_1h_0$ satisfies
\beq
\frac{\p \ell_t}{\p t}+S^{\ell_t}-P=C_1S^{h_0}-P<0,
\qquad \ell_0<h_0.
\eeq
The parabolic comparison principle in Theorem \ref{comparison} asserts that
\beq
C_1h_0\leq h_t
\label{uniform lower bound from barrier}
\eeq
holds for  all $t\geq0$. This is equivalent to the estimate $$h_t^{-1}\leq C_1^{-1}h_0^{-1}.$$  Together with \eqref{uniform theta bound proposition new observation}, this gives
\beq
\left|\Lambda_{\om_g}\left(\sq\Theta^{h_t}\right)\right|_{h_t}
=|S^{h_t}|_{h_t}
\leq |S^{h_t}-P|_{h_t}+|P|_{h_t}\leq C_4+C_1^{-1}\sup_M|P|_{h_0}.\eeq
This completes the proof.
\eproof

\btheorem \label{thm first case} Let  $E$ be a holomorphic vector bundle over a compact Hermitian manifold
$(M,\omega_g)$.  Let $h_0$ be a Hermitian metric on $E$ and  $P\in \mathrm{Herm}^+(E)$.
If there exists a uniform constant $C_1>0$ such that  the following estimate holds along the flow \eqref{PHYMF}
\beq h_t\leq C_1 h_0,\label{upper bound assumption}\eeq
then the flow \eqref{PHYMF} converges smoothly to a Hermitian metric $h_\infty$ on $E$ satisfying
\beq\Lambda_{\omega_g}\left(\sqrt{-1}R^{h_\infty}\right)=P.\eeq
\etheorem

\bproof
The long-time existence follows from Proposition~\ref{prop non-negative P long time existence}. Let
\beq
C_2:=\min_{x\in M}\lambda_{\min}(Ph_0^{-1})(x)>0.
\eeq
The upper bound \eqref{upper bound assumption} implies
\beq
\lambda_{\min}(Ph_t^{-1})
=\min_{v\ne0}\frac{P(v,\bar v)}{h_t(v,\bar v)}
\geq C_1^{-1}C_2.
\eeq
For $u=|S^{h_t}-P|_{h_t}^2$, Corollary~\ref{cor tilde S inequality} gives
\beq
\left(\frac{\p }{\p t}-\Delta_\C \right)u + 2 C_1^{-1}C_2 u  \leq 0.
\eeq
By comparison with the corresponding ODE, for all $t\geq 0$,
\beq
\sup_Mu(\bullet,t)\leq e^{-2C_1^{-1}C_2t}\sup_Mu(\bullet,0).
\eeq
In particular,
\beq
\left|S^{h_t}-P\right|_{h_t}\leq C_3e^{-C_4t},
\label{exponential decay}
\eeq
where $C_3=C_3(\om_g,h_0,P)=\sup_M\left|S^{h_0}-P\right|_{h_0}$ and $C_4=C_4(\om_g,h_0,P,C_1)$. Moreover,  for every $x\in M$ and $ v\in E_x\setminus\{0\}$,
\beq
\left|\frac{d}{dt}\log h_t(v,\bar v)\right|
\leq C_3e^{-C_4t}.
\eeq
Integrating over $[0,\infty)$, one has the following estimates for all $t\geq 0$
\beq
C_5^{-1}h_0\leq h_t\leq C_5h_0,
\label{two sided bound first case}
\eeq
where  $C_5=C_5(\om_g,h_0,P,C_1)>1$. Since $\partial_t h_t = -(S^{h_t} - P)$, estimates \eqref{exponential decay} and \eqref{two sided bound first case} imply
\begin{equation}
\left\| \frac{\partial h_t}{\partial t} \right\|_{C^0(M,\omega_g, h_0)} \leq C_6  e^{-C_7 t}.
\end{equation}
Thus, for $t \geq s \geq 0$,
\begin{equation}
\| h_t - h_s \|_{C^0; h_0} \leq \int_s^t \left\| \frac{\partial h_\sigma}{\partial \sigma} \right\|_{C^0(M,\omega_g, h_0)} d\sigma \leq 2 C_6  e^{-C_7 s}.
\end{equation}
Hence $h_t$ converges uniformly to a continuous Hermitian tensor $h_\infty$. Taking the limit in \eqref{two sided bound first case}, we conclude that $h_\infty$ is a continuous Hermitian metric on $E$.
Finally, utilizing \eqref{exponential decay} and \eqref{two sided bound first case}, Theorem~\ref{parabolicbootstrap} implies that for every integer $m \geq 0$, there exists a constant $\tilde{C}_m = \tilde{C}_m(\omega_g, h_0, P, C_6, C_{7})$ such that for all $t \geq 1$,
\begin{equation}
\| h_t \|_{C^m(M,\omega_g, h_0)} \leq \tilde{C}_m.
\end{equation}
The Arzel\`a-Ascoli theorem implies that every sequence $t_i \to \infty$ admits a subsequence converging in $C^\infty$ to a smooth Hermitian metric. Since we have already established $C^0$ convergence, the limit must be $h_\infty$. Thus, the entire flow converges to $h_\infty$ in $C^\infty$. Passing to the limit in \eqref{exponential decay} yields
\begin{equation}
S^{h_\infty} = P,
\end{equation}
which is equivalent to $\Lambda_{\omega_g}(\sqrt{-1}\, R^{h_\infty}) = P$. This completes the proof.
\eproof

\vskip 1\baselineskip

\section{The blow-up analysis along the flow}

Let $(M,\om_g)$ be a compact Hermitian manifold, and $(E,h_0)$ be a Hermitian vector bundle on $M$. Suppose that $P\in \mathrm{Herm}^+(E)$.
Let $h_t$ be the solution to the prescribed Hermitian-Yang-Mills flow \eqref{PHYMF} defined for $0\leq t <\infty$.
We define
\beq
H_t=h_t\cdot h_0^{-1}\in\Gamma(M,E^*\ts E).
\eeq
Let  \beq \Lambda_t := \sup_M \lambda_{\max}(H_t)\eeq denote the supremum over $M$ of the largest eigenvalue of $H_t$. It is well known that the eigenvalues of an endomorphism are invariant under a change of basis, and hence are independent of the choice of Hermitian metric.   Define
\beq
\tilde H_t:=\Lambda_t^{-1}\cdot H_t.
\eeq
Then it is clear that $ \tilde H_t\leq \mathrm{Id}_E$ with respect  to both $h_0$ and $h_t$. The following lemma  will be useful for later computations.

\blemma \label{lem pointwise estimate}
Let $(E,h_0)$ be a Hermitian holomorphic  bundle over a complex manifold $M$.
If $H_t\in\mathrm{Herm}^+(E,h_0)\subset \Gamma(M,E^*\ts E)$ is a smooth family of positive $h_0$-Hermitian endomorphisms  on $E$, then for any $\sigma\geq 0$,
\beq
\tr_E\left( \left(\partial_tH_t\right)\cdot H_t^{-1} \cdot\partial_tH_t^\sigma\right) \geq 0.
\eeq
\elemma

\begin{proof}
Fix a point $x \in M$ and choose an $h_0$-orthonormal basis $\{e_\alpha\}$ of $E_x$ such that
\[
H_t(x) = \sum_{\alpha=1}^r \lambda_\alpha \, e^\alpha \otimes e_\alpha .
\]
In this basis, we write
\[
(\partial_t H_t)(x) = \sum_{\alpha,\beta=1}^r K_{\alpha\beta} \, e^\alpha \otimes e_\beta .
\]
    Since $\partial_tH_t$ is also $h_0$-Hermitian, $$K_{\beta\alpha}=\overline{K_{\alpha\beta}}.$$
    A straightforward computation shows that
    \beq
    \left(\partial_tH_t^\sigma\right)(x)=\sum_{\alpha,\beta}\phi^\sigma_{\alpha\beta}K_{\alpha\beta}e^\alpha\ts e_\beta,
    \eeq
    where \beq
    \phi^{\sigma}_{\alpha\beta}:=
    \begin{cases}
        \sigma \lambda_\alpha^{\sigma-1}, & \alpha=\beta,\\[3pt]
        \dfrac{\lambda_\alpha^\sigma-\lambda_\beta^\sigma}{\lambda_\alpha-\lambda_\beta}, &  \alpha\ne \beta, \lambda_\alpha\neq \lambda_\beta ,\\[3pt]
        \sigma \lambda_\alpha^{\sigma-1}, &  \alpha\ne \beta, \lambda_\alpha= \lambda_\beta.
    \end{cases} \label{phisigma}
    \eeq
    It is obvious that $\phi^\sigma_{\alpha\beta}>0$.  Hence
    \beq
    \tr_E\left( \left(\partial_tH_t\right)\cdot H_t^{-1} \cdot\partial_tH_t^\sigma\right)(x)
    =
    \sum_{\alpha,\beta}\lambda_\beta^{-1}\phi^\sigma_{\alpha \beta}|K_{\alpha\beta}|^2
    \ge0.
    \eeq
    The proof is complete.
\end{proof}

\blemma \label{lem local estimate for tilde H}
Let $E$ be a holomorphic vector bundle on a Hermitian manifold $(M,\om_g)$.
Suppose that $h$ and $h_0$ are two Hermitian metrics on $E$ and $H=h\cdot h_0^{-1}$. Then for any $\sigma\in (0,1]$, one has
\beq
\left| \p^{h_0}H^\sigma \cdot H^{-\sigma/2} \right|_{g,h_0}^2
\leq  \frac{1}{\sigma}\Delta_\C\left(\tr_E H^\sigma\right)
+\tr_E\left(\left(\Lambda_{\om_g} \left(\sq\Theta^h\right)-\Lambda_{\om_g}\left(\sq \Theta^{h_0}\right)\right)\cdot H^\sigma\right), \label{estimateA}
\eeq
and
\beq
\left|\bp H^\sigma \right|_{g,h}^2
\leq  \frac{1}{2\sigma}\Delta_\C\left(\tr_E H^{2\sigma}\right)
+\tr_E\left(\left(\Lambda_{\om_g} \left(\sq\Theta^h\right)-\Lambda_{\om_g}\left(\sq \Theta^{h_0}\right)\right)\cdot H^{2\sigma}\right). \label{estimateB}
\eeq
\elemma

\bproof
For any $\rho>0$, Lemma~\ref{lem Theta formula} gives
\begin{eqnarray}
&&\tr_E\left(\left(\Lambda_{\om_g} \left(\sq\Theta^h\right)-\Lambda_{\om_g}\left(\sq \Theta^{h_0}\right)\right)\cdot H^\rho\right)\nonumber\\
&=&\Lambda_{\om_g}\sq\bp\left\{(\p^{h_0}H)\cdot H^{-1},H^\rho\right\}_{h_0}
+\Lambda_{\om_g}\sq\left\{(\p^{h_0}H)\cdot H^{-1},\p^{h_0}H^\rho\right\}_{h_0}.
\label{basic trace identity local estimate}
\end{eqnarray}
Here $\{\cdot,\cdot\}_{h_0}$ denotes contraction  induced by $h_0$. Since
\beq
\left\{(\p^{h_0}H)\cdot H^{-1},H^\rho\right\}_{h_0}
=\tr_E\left((\p^{h_0}H)\cdot H^{\rho-1}\right)
=\frac{1}{\rho}\p\tr_E(H^\rho),
\eeq
and $\Lambda_{\om_g}\sq\bp\p=-\Delta_\C$ on functions, \eqref{basic trace identity local estimate} is reduced to
\beq
\tr_E\left(\left(\Lambda_{\om_g} \left(\sq\Theta^h\right)-\Lambda_{\om_g}\left(\sq \Theta^{h_0}\right)\right)\cdot H^\rho\right)
=-\frac{1}{\rho}\Delta_\C\tr_E(H^\rho)+Q_\rho,
\label{trace identity with Q rho}
\eeq
where
\beq
Q_\rho:=\Lambda_{\om_g}\sq\left\{(\p^{h_0}H)\cdot H^{-1},\p^{h_0}H^\rho\right\}_{h_0}.
\eeq
In particular, \eqref{estimateA} is equivalent to the estimate
\beq Q_\sigma\geq \left| \p^{h_0}H^\sigma \cdot H^{-\sigma/2}\right|_{g,h_0}^2\eeq
and
\eqref{estimateB} is equivalent to the estimate
\beq Q_{2\sigma}\geq \left|\bp H^\sigma \right|_{g,h}^2.\eeq

\noindent Indeed, we fix $x\in M$ and choose an $h_0$-unitary basis
$\{e_\alpha\}$ of $E_x$ which diagonalizes $H(x)$: \beq
H(x)=\sum_{\alpha=1}^r\lambda_\alpha e^\alpha\ts e_\alpha, \qquad
\lambda_\alpha>0. \eeq We also write \beq
\left(\p^{h_0}H\right)(x)=\sum_{\alpha,\beta}K_{\alpha\beta}e^\alpha\ts
e_\beta \eeq where $K_{\alpha\beta}\in T_x^{*1,0}M$.  By
computations in Lemma~\ref{lem pointwise estimate}, we obtain \beq
\p^{h_0}H^\sigma=\sum_{\alpha,\beta}\phi^\sigma_{\alpha\beta}K_{\alpha\beta}e^\alpha\ts
e_\beta, \qquad
\p^{h_0}H^{2\sigma}=\sum_{\alpha,\beta}\phi^{2\sigma}_{\alpha\beta}K_{\alpha\beta}e^\alpha\ts
e_\beta, \label{divided difference for covariant derivative} \eeq
where $\phi^\sigma_{\alpha\beta}$ and $\phi^{2\sigma}_{\alpha\beta}$
are defined in \eqref{phisigma}.  It is easy to see that
\begin{eqnarray} Q_\sigma
=\sum_{\alpha,\beta}\lambda_\beta^{-1}\phi^\sigma_{\alpha\beta}|K_{\alpha\beta}|_g^2
\geq
\sum_{\alpha,\beta}\lambda_\beta^{-\sigma}\left(\phi^\sigma_{\alpha\beta}\right)^2|K_{\alpha\beta}|_g^2
=\left| \p^{h_0}H^\sigma \cdot H^{-\sigma/2}\right|_{g,h_0}^2, \label{Q
sigma first estimate}
\end{eqnarray}
where the inequality follows from  the fact that the inequality
\beq
0\leq \frac{x^\sigma-y^\sigma}{x-y}\leq y^{\sigma-1}
\label{scalar estimate one}
\eeq
holds for any $0<\sigma\leq1$, $x,y>0$ and $x\neq y$.  \\

On the other hand,  note that $\bp H^\sigma$ is the $h_0$-adjoint of $\p^{h_0}H^\sigma$, i.e.,
\beq \bar\p_i \left(H^\sigma\right)_\gamma^\alpha = h_0^{\alpha\bar \lambda}h_{0,\gamma\bar \mu}\left(\overline{\p^{h_0}_i \left(H^\sigma\right)_\lambda^\mu}\right). \label{Hermitianrelation} \eeq
It is easy to compute that
\beq
\left|\bp H^\sigma\right|_{g,h}^2
=\sum_{\alpha,\beta}\lambda_\alpha\lambda_\beta^{-1}
\left(\phi^\sigma_{\alpha\beta}\right)^2|K_{\alpha\beta}|_g^2.
\eeq
Hence, one has
\begin{eqnarray}
Q_{2\sigma}
=\sum_{\alpha,\beta}\lambda_\beta^{-1}\phi^{2\sigma}_{\alpha\beta}|K_{\alpha\beta}|_g^2
\geq \sum_{\alpha,\beta}\lambda_\alpha\lambda_\beta^{-1}
\left(\phi^\sigma_{\alpha\beta}\right)^2|K_{\alpha\beta}|_g^2
=\left|\bp H^\sigma\right|_{g,h}^2,
\end{eqnarray}
where the inequality follows from  the fact that the inequality
\beq
\frac{x^{2\sigma}-y^{2\sigma}}{x-y}
\geq x\left(\frac{x^\sigma-y^\sigma}{x-y}\right)^2
\label{scalar estimate two}
\eeq
holds for any $0<\sigma\leq1$, $x,y>0$  and $x\neq y$.  This completes the proof.
\eproof

\bproposition \label{traceestimate}
Let  $(E,h_0)$ be a Hermitian vector bundle over a compact Hermitian manifold
$(M,\omega_g)$ and $P\in \mathrm{Herm}^{+}(E)$. Then there exists a constant $C_1=C_1(\om_g,h_0,P)>0$ such that for all  $\sigma\in (0,1]$, the following estimates hold along the flow \eqref{PHYMF}:
\beq
\left| \p^{h_0}H_t^\sigma \cdot H_t^{-\sigma/2}\right|_{g,h_0}^2
\leq  \frac{1}{\sigma}\Delta_\C\left(\tr_E H_t^\sigma\right)
+C_1\tr_E H_t^\sigma,
\eeq
and
\beq
\left|\p^{h_0} \tilde H_t^\sigma \right|_{g,h_0}^2
\leq \left|\p^{h_0}\tilde H_t^\sigma \cdot \tilde H_t^{-\sigma/2}\right|_{g,h_0}^2
\leq  \frac{1}{\sigma}\Delta_\C\left(\tr_E \tilde H_t^\sigma\right)+C_1.
\label{tilde H inequality}
\eeq
Moreover, there exists a constant $C_2=C_2(\om_g,h_0,P)>0$ such that for all $t\in [0,+\infty)$,
\beq
\left\|\tilde H_t\right\|_{L^2(M,\omega_g, h_0)}\geq C_2. \label{non-trivial condition}
\eeq

\eproposition

\bproof
Applying Lemma~\ref{lem local estimate for tilde H} with $h = h_t$ yields
\beq
\left|\p^{h_0}H_t^\sigma \cdot H_t^{-\sigma/2}\right|_{g,h_0}^2
\leq  \frac{1}{\sigma}\Delta_\C\left(\tr_E H_t^\sigma\right)
+\tr_E\left(\left(\Lambda_{\om_g} \left(\sq\Theta^{h_t}\right)-\Lambda_{\om_g}\left(\sq \Theta^{h_0}\right)\right)\cdot H_t^\sigma\right).
\eeq
Since the tensor $H_t^{\sigma}$ is self-adjoint
with respect to both $h_0$ and $h_t$,
\beq |H_t^\sigma|_{h_t}=|H_t^\sigma|_{h_0}=\left(\tr_EH_t^{2\sigma}\right)^{1/2}\leq\tr_EH_t^\sigma, \eeq and by Proposition \ref{prop uniform Ric2 bound}, there is a constant
$C=C(\om_g,h_0,P)>0$ such that
\be
&&\left|\tr_E\left(\left(\Lambda_{\om_g} \left(\sq\Theta^{h_t}\right)-\Lambda_{\om_g}\left(\sq \Theta^{h_0}\right)\right)\cdot H_t^\sigma\right)\right|\\
&\leq & \left|\Lambda_{\om_g} \left(\sq\Theta^{h_t}\right)\right|_{h_t}\cdot \left|H_t^\sigma\right|_{h_t}+\left|\Lambda_{\om_g} \left(\sq\Theta^{h_0}\right)\right|_{h_0}\cdot \left|H_t^\sigma\right|_{h_0}\\
&\leq&
C\tr_EH_t^\sigma.
\ee  Hence, we get
\beq
\left|\p^{h_0}H_t^\sigma \cdot H_t^{-\sigma/2}\right|_{g,h_0}^2
\leq  \frac{1}{\sigma}\Delta_\C\left(\tr_E H_t^\sigma\right)
+C\tr_E H_t^\sigma.
\eeq
Multiplying by $\Lambda_t^{-\sigma}$ yields
\beq
\left| \p^{h_0}\tilde H_t^\sigma \cdot \tilde H_t^{-\sigma/2}\right|_{g,h_0}^2
\leq  \frac{1}{\sigma}\Delta_\C\left(\tr_E \tilde H_t^\sigma\right)
+C\tr_E \tilde H_t^\sigma.\label{estimateHsigma}
\eeq
Since $0<\tilde H_t\leq\id_E$, we have $\tr_E\tilde H_t^\sigma\leq\rk(E)$ and
$\tilde H_t^{-\sigma/2}\geq\id_E$. This proves \eqref{tilde H inequality} with
$C_1=C\rk(E)$.

Taking $\sigma=1$  in \eqref{estimateHsigma}, one has
\beq
-\Delta_\C \left(\tr_E \tilde H_t\right)\leq C_5\tr_E\tilde H_t,
\eeq
where $C_5=C_5(\om_g,h_0,P)$. Applying the Moser iteration to this scalar inequality yields
\beq
\sup_M  \tr_E \tilde H_t
\leq C_6\left\|\tr_E \tilde H_t\right\|_{L^2(M,\omega_g, h_0)},
\eeq
where $C_6=C_6(\om_g,h_0,P)$. By construction, we have
\beq \sup_M\tr_E\tilde H_t\geq1.\eeq Therefore
\beq
1
\leq C_6\left\|\tr_E \tilde H_t\right\|_{L^2(M,\omega_g, h_0)}.
\eeq
This proves \eqref{non-trivial condition}
with $C_2=\left(\rk(E)C_6\right)^{-1}$.
The proof is completed. \eproof

\vone
\bproposition \label{prop tilde H estimates}
Let  $(E,h_0)$ be a Hermitian vector bundle over a compact Gauduchon manifold
$(M,\omega_g)$ and $P\in \mathrm{Herm}^{+}(E)$.
Suppose that  $h_t$ is the  solution to the flow \eqref{PHYMF} defined for $0\leq t <\infty$. Then there exists a constant $C=C(\om_g,h_0,P)$ such that for all $t\in [0,+\infty)$ and $\sigma\in(0,1]$,
\beq
\left\| \tilde H_t^\sigma \right\|_{W^{1,2}(M,\omega_g, h_0)}\leq C, \label{tilde H W2p estimate}
\eeq
and
\beq
\left\| \bp\tilde H_t^\sigma \right\|_{L^{2}(M,\omega_g, h_t)}\leq C.
\eeq
\eproposition

\bproof
Since $0<\tilde H_t\leq\id_E$,
\beq
\left\| \tilde H_t^\sigma \right\|_{L^2(M,\omega_g, h_0)}^2
\leq \rk(E)\mathrm{Vol}(M,\om_g). \label{tilde H L2 estimate}
\eeq
Integrating inequality \eqref{tilde H inequality} over $M$
and using the Gauduchon condition $\partial\bar{\partial}\,\omega_g^{n-1}=0$,
we obtain
\begin{equation}
\label{d tilde H L2 estimate}
\bigl\|\partial^{h_0}\tilde H_t^\sigma\bigr\|_{L^2(M,\omega_g, h_0)}^2
\leq  C_1\,\mathrm{Vol}(M,\omega_g).
\end{equation}
Since $\tilde H_t^\sigma$ is Hermitian with respect to $h_0$, i.e.,  $(\tilde H_t^\sigma)^*=\tilde H_t^\sigma$,
we have
\[
\bar{\partial}\tilde H_t^\sigma
= \bigl(\partial^{h_0}\tilde H_t^\sigma\bigr)^*,
\]
and therefore
\begin{equation}
\label{d tilde H equation}
\bigl\|\bar{\partial}\tilde H_t^\sigma\bigr\|_{L^2(M,\omega_g, h_0)}
= \bigl\|\partial^{h_0}\tilde H_t^\sigma\bigr\|_{L^2(M,\omega_g, h_0)}.
\end{equation}
Combining \eqref{d tilde H L2 estimate} and \eqref{d tilde H equation}
yields a uniform $W^{1,2}$-bound
\[
\|\tilde H_t^\sigma\|_{W^{1,2}(M,\omega_g, h_0)} \le C,
\qquad C=C(\omega_g,h_0,P).
\]
Set $h=h_t$ in inequality \eqref{estimateB} of Lemma~\ref{lem local estimate for tilde H}
and multiply by $\Lambda_t^{-2\sigma}$. Then
\beq
\left|\bp \tilde H^\sigma_t \right|_{g,h_t}^2
\leq  \frac{1}{2\sigma}\Delta_\C\left(\tr_E\tilde H_t^{2\sigma}\right)
+\tr_E\left(\left(\Lambda_{\om_g}\left(\sq \Theta^{h_t}\right)-\Lambda_{\om_g}\left(\sq \Theta^{h_0}\right)\right)\cdot \tilde H_t^{2\sigma}\right).
\eeq
By Proposition~\ref{prop uniform Ric2 bound},
\beq \left|\Lambda_{\om_g}\sq\Theta^{h_t}\right|_{h_t}\leq C_2,\eeq  where
$C_2=C_2(\om_g,h_0,P)>0$.  Since $\tilde H_t^{2\sigma}$ is self‑adjoint with respect to both $h_0$ and $h_t$,
and its eigenvalues lie in $(0,1]$, we have
\beq 
\bigl|\tilde H_t^{2\sigma}\bigr|_{h_0}
=
\bigl|\tilde H_t^{2\sigma}\bigr|_{h_t}
\le \sqrt{\operatorname{rk}(E)}.
\eeq 
Noting that $\Lambda_{\omega_g}\left(\sqrt{-1}\,\Theta^{h_0}\right)$ is fixed,
there exists a constant $
C_3=C_3(\omega_g,h_0,P,\operatorname{rk}(E))>0
$
such that
\beq
\left|\bp \tilde H^\sigma_t \right|_{g,h_t}^2
\leq  \frac{1}{2\sigma}\Delta_\C\left(\tr_E\tilde H_t^{2\sigma}\right)+C_3.
\eeq
Integrating over $M$ and using the Gauduchon condition
$\partial\bar{\partial}\,\omega_g^{n-1}=0$,
the Laplacian term drops out. Hence
$$
\bigl\|\bar{\partial}\tilde H^\sigma_t\bigr\|_{L^2(M,\omega_g,h_t)}^2
\le C_3\,\mathrm{Vol}(M,\omega_g),
$$
and therefore
$$
\bigl\|\bar{\partial}\tilde H^\sigma_t\bigr\|_{L^2(M,\omega_g,h_t)}
\le C_1,
$$
where $C_1:=\bigl(C_3\,\mathrm{Vol}(M,\omega_g)\bigr)^{1/2}$.
\eproof

\bproposition  \label{prop new observation}
Let  $(E,h_0)$ be a Hermitian vector bundle over a compact Hermitian manifold
$(M,\omega_g)$. For any $P\in \mathrm{Herm}(E)$,  if we set \beq \theta_t=\Lambda_{\om_g}\left(\sqrt{-1}\Theta^{h_t}\right)-P\cdot h_t^{-1}, \qtq{and} V_t=P\cdot h_t^{-1}, \eeq  then for all $t\geq 0$
\beq
\partial_t\theta_t=g^{i\bar j}\nabla_i\nabla_{\bar j}\theta_t-\theta_tV_t,
\eeq
and
\beq
\left(\p_t-\Delta_\C\right)\tr_E\left(\theta_t^2\right)= -2\left|\bp \theta_t\right|_{g,h_t}^2 -2\tr_E\left(\theta_t^2 V_t\right)\label{ev-theta}
\eeq
where $\nabla$ is the Chern connection on $\mathrm{End}(E)$ induced by $h_t$.
\eproposition

\bproof  We use the following notations:
\beq
\tilde S_t=S^{h_t}-P,
\qquad
K^{h_t}=\Lambda_{\om_g}\left(\sq\Theta^{h_t}\right)=S^{h_t}h_t^{-1}.
\eeq
Then $\theta_t=\tilde S_t h_t^{-1}=K^{h_t}-V_t$.  The proof of Lemma~\ref{lem tilde S evolution} gives
\beq
\frac{\p}{\p t}\tilde S_{\alpha\bar\beta}
= g^{i\bar j}\nabla_i\nabla_{\bar j}\tilde S_{\alpha\bar\beta}
-\tilde S_{\alpha\bar\gamma}h_t^{\delta\bar\gamma}S^{h_t}_{\delta\bar\beta}.
\label{tilde S time derivative formula}
\eeq
Since $\nabla h_t=0$, one deduces that
\beq
\left(\frac{\p}{\p t}\tilde S_t\right)h_t^{-1}
=g^{i\bar j}\nabla_i\nabla_{\bar j}\theta_t-\theta_tK^{h_t}.
\eeq
On the other hand, the flow equation $\p_t h_t=-\tilde S_t$ gives
\beq
\p_t h_t^{-1}=h_t^{-1}\tilde S_t h_t^{-1}.
\eeq
Therefore
\beq
\p_t\theta_t
=\left(\frac{\p}{\p t}\tilde S_t\right)h_t^{-1}
+\tilde S_t\left(\frac{\p}{\p t}h_t^{-1}\right)=g^{i\bar j}\nabla_i\nabla_{\bar j}\theta_t-
\theta_tK^{h_t}+\theta_t^2.
\eeq
Since $K^{h_t}=\theta_t+V_t$, we obtain
\beq
\p_t\theta_t=g^{i\bar j}\nabla_i\nabla_{\bar j}\theta_t-\theta_tV_t.
\eeq
Note also that
\beq
\tr_E(\theta_t^2)=|\tilde S_t|_{h_t}^2.
\eeq
In the formula \eqref{tilde S evolution}, we have
\beq \tilde S_{\alpha\bar\beta}  P_{\gamma\bar\delta}\tilde S_{\lambda\bar \mu} h^{\alpha\bar \mu}  h^{\lambda \bar \delta} h^{\gamma\bar\beta}=\mathrm{tr_E}(\theta_t V_t \theta_t)=\tr_E(\theta_t^2V_t).\eeq
Hence, we obtain
\beq
\left(\p_t-\Delta_\C\right)|\tilde S_t|_{h_t}^2
=-2\tr_E(\theta_t^2V_t)-|\nabla\tilde S_t|_{g,h_t}^2.
\eeq
On the other hand, since $\theta_t$ is $h_t$-Hermitian,
\beq
|\nabla\tilde S_t|_{g,h_t}^2=|\p^{h_t}\theta_t|_{g,h_t}^2+|\bp\theta_t|_{g,h_t}^2=2|\bp\theta_t|_{g,h_t}^2.
\eeq
Hence, we obtain \eqref{ev-theta}.
\eproof

\btheorem \label{thm oscillation}
Let  $(E,h_0)$ be a Hermitian vector bundle over a compact Gauduchon manifold
$(M,\omega_g)$ and $P\in \mathrm{Herm}^{+}(E)$.
Suppose that  $h_t$ is the solution to the flow \eqref{PHYMF} defined for $0\leq t <\infty$. Suppose that there exists a sequence $t_i\to+\infty$ with
\beq
\lim_{i\to\infty}\sup_M\lambda_{\max}(H_{t_i})=+\infty.\label{blow-up}
\eeq
Then
\beq
\lim_{i\to\infty}\inf_M\lambda_{\max}(H_{t_i})=+\infty.\label{blow-up inf}
\eeq
Moreover, for every $\sigma\in (0,1]$,
\beq
\lim_{i\to\infty}\sup_M\tr_E\left(Ph_{t_i}^{-1}\cdot \tilde H_{t_i}^\sigma\right)=0.\label{zero term 1}
\eeq
\etheorem
\begin{proof}
    By Proposition~\ref{prop uniform Ric2 bound}, there are constants
    $C_1=C_1(\om_g,h_0,P)>0$ and $C_2=C_2(\om_g,h_0,P)>0$ such that
    \beq
    \lambda_{\min}(H_t)\geq C_1,
    \qquad
    |S^{h_t}-P|_{h_t}\leq C_2
    \eeq
    for all $t\geq0$.  Hence
    \beq
    |s_{h_t}|=\left|\mathrm{tr}_{h_t}S^{h_t}\right|
    \leq \sqrt{\rk(E)}|S^{h_t}-P|_{h_t}+\tr_{h_t}P
    \leq C_3,
    \eeq
    where $C_3=C_3(\om_g,h_0,P)>0$.  Since $H_t=h_t\cdot h_0^{-1}$, we obtain the following difference between Chern scalar curvatures
    \beq
    \Delta_\C\log \det H_t=-s_{h_t}+s_{h_0}.
    \eeq
    In particular,  $$|\Delta_\C\log\det H_t|\leq C_4$$ for
    $C_4=C_4(\om_g,h_0,P)>0$.  A standard Green function estimate for the complex Laplacian
    $\Delta_{\mathbb{C}}$ on the compact Gauduchon manifold yields
    \beq
    \operatorname{osc}\left(\log\det H_t\right)
    =\sup_M\log\det H_t-\inf_M\log\det H_t\leq C_5,
    \label{finite osc}
    \eeq
    where $C_5=C_5(\om_g,h_0,P)>0$.  Since
    $\lambda_{\min}(H_t)\geq C_1$,
    \beq
    \sup_M \log\det H_t
    \geq (\rk(E)-1)\log C_1+\log\sup_M\lambda_{\max}(H_t).
    \eeq
    The assumption \eqref{blow-up} therefore implies
    \beq \inf_M\det H_{t_i}\to\infty, \eeq  and consequently
    \beq
    \inf_M\lambda_{\max}(H_{t_i})
    \geq\left(\inf_M\det H_{t_i}\right)^{1/\rk(E)}\to\infty.
    \eeq
    This proves \eqref{blow-up inf}.

    Fix $\sigma\in(0,1]$.  At a point $x\in M$, choose an $h_0$-unitary basis
    diagonalizing $H_{t_i}(x)$:
    \beq
    h_{t_i,\alpha\bar\beta}=\lambda_\alpha\delta_{\alpha\beta},
    \qquad
    \lambda_1\leq\cdots\leq\lambda_r.
    \eeq
    Then
    \beq
    \tr_E\left(Ph_{t_i}^{-1}\tilde H_{t_i}^\sigma\right)(x)
    =\sum_{\alpha=1}^r P_{\alpha\bar\alpha}
    \frac{\lambda_\alpha^{\sigma-1}}{\Lambda_{t_i}^\sigma}
    \leq C_1^{\sigma-1}\tr_{h_0}P\cdot \lambda_r^{-\sigma}.
    \eeq
    Taking the supremum over $M$ and using \eqref{blow-up inf}, we obtain
    \eqref{zero term 1}.
\end{proof}

\btheorem \label{prop key estimate for degree comparison}
Let  $(E,h_0)$ be a Hermitian vector bundle over a compact Gauduchon manifold
$(M,\omega_g)$ and $P\in \mathrm{Herm}^{+}(E)$.
Suppose that  $h_t$ is the solution to the flow \eqref{PHYMF} defined for $0\leq t <\infty$. Suppose that
\beq
\limsup_{t\to\infty}\Lambda_t=+\infty.
\eeq
Then there exists a sequence $\{t_i\}$ with $t_i\to+\infty$, such that
\beq
\lim_{i\to\infty}\Lambda_{t_i}=+\infty
\eeq
and  for all $\sigma\in (0,1]$
\beq
\limsup_{i\to\infty} \int_M\tr_E\left(\Lambda_{\om_g}\left(\sq \Theta^{h_{t_i}}\right)\cdot \tilde H_{t_i}^{\sigma}\right)\om_g^n\leq 0.\label{key estimate for degree comparison}
\eeq
\etheorem

\bproof Since $\limsup_{t\to\infty}\Lambda_t=+\infty$, we construct a sequence $\{t_i\}\subset \R^+$ as follows:
choose $T_i\to\infty$ with $\Lambda_{T_i}\to\infty$, then there exists $t_i\in [0,T_i]$ such that
\beq
\Lambda_{t_i}=\max_{0\leq s\leq T_i}\Lambda_s. \label{definitionofLambda}
\eeq
In particular, one has \beq \lim_{i\to\infty}\sup_M\lambda_{\max}(H_{t_i})=\lim_{i\to\infty}\Lambda_{t_i}=+\infty.\eeq
Fix $\sigma\in(0,1]$.  Recall that
\beq
\theta_t=\Lambda_{\om_g}\left(\sqrt{-1}\Theta^{h_t}\right)-Ph_t^{-1},
\qquad V_t=Ph_t^{-1}.
\eeq
We define
\beq T_\sigma(t):=\int_M\tr_E\left(\theta_t\tilde H_t^\sigma\right)\om_g^n.\eeq
We claim that
\beq
\limsup_{i\to\infty}T_\sigma(t_i)\leq0. \label{A sigma chosen time nonpositive}\eeq
Hence, \eqref{key estimate for degree comparison} follows from this claim and \eqref{zero term 1}. To prove the estimate \eqref{A sigma chosen time nonpositive}, we define
\beq
\cA_\sigma(t)=\int_M\tr_E\left(\theta_tH_t^\sigma\right)\om_g^n,
\qquad
\cB_\sigma(t)=\int_M\tr_E\left(H_t^\sigma\right)\om_g^n .
\eeq
Since $\p_tH_t=-\theta_tH_t$,  one can show that
\beq \mathrm{tr}_E\left(\p_t H^\sigma_t\right)=\sigma\mathrm{tr}_E\left(H^{\sigma-1}_t\p_t H_t\right)=-\sigma \tr_E(\theta_tH_t^\sigma),\eeq
and so
\beq
\cB_\sigma'(t)=-\sigma\int_M\tr_E(\theta_tH_t^\sigma)\om_g^n=-\sigma \cA_\sigma(t) \label{Bderivative}
\eeq

For every $t\geq 0$, integration by parts gives
\begin{eqnarray}
&&\int_M\tr_E\left(\left(g^{i\bar j}\nabla_i\nabla_{\bar j}\theta_t\right)
H_t^\sigma\right)\om_g^n\nonumber\\
&=&
n\sqrt{-1}\int_M
\left\{\p^{h_t}\bp\theta_t,H_t^\sigma\right\}_{h_t}
\wedge\om_g^{n-1}\nonumber\\
&=&
n\sqrt{-1}\int_M
\left\{\bp\theta_t,H_t^\sigma\right\}_{h_t}
\wedge\p\left(\om_g^{n-1}\right)
-\int_M
\LL\bp\theta_t,\bp H_t^\sigma\RL_{g,h_t}\om_g^n .
\label{integrate by parts theta direct proof}
\end{eqnarray}
By Proposition~\ref{prop new observation},
\beq
\partial_t\theta_t
=
g^{i\bar j}\nabla_i\nabla_{\bar j}\theta_t-\theta_tV_t .
\label{theta evolution direct proof}
\eeq
Differentiating $\cA_\sigma(t)$, and using \eqref{theta evolution direct proof} and
\eqref{integrate by parts theta direct proof}, we obtain
\begin{eqnarray}
\cA_\sigma'(t)&=&\int_M\tr_E\left(\p_t\theta_t\cdot H_t^\sigma\right)\om_g^n+\int_M\tr_E\left(\theta_t\cdot \p_tH_t^\sigma\right)\om_g^n\nonumber\\
&=&
n\sqrt{-1}\int_M
\left\{\bp\theta_t,H_t^\sigma\right\}_{h_t}
\wedge\p\left(\om_g^{n-1}\right)
-\int_M
\LL\bp\theta_t,\bp H_t^\sigma\RL_{g,h_t}\om_g^n
\nonumber\\
&&
-\int_M\tr_E\left(\theta_tV_tH_t^\sigma\right)\om_g^n
+\int_M\tr_E\left(\theta_t\partial_tH_t^\sigma\right)\om_g^n .
\label{cA derivative direct proof}
\end{eqnarray}
Since $\partial_tH_t=-\theta_tH_t$, Lemma~\ref{lem pointwise estimate} implies
\beq
\tr_E\left(\theta_t\partial_tH_t^\sigma\right)
=
-\tr_E\left((\partial_tH_t)H_t^{-1}\partial_tH_t^\sigma\right)\leq 0.
\eeq
Recall that $H_t^\sigma=\Lambda_t^\sigma\tilde H_t^\sigma$.
Moreover, since $0<\tilde H_t^\sigma\leq\id_E$,
we conclude that
\be
\left|
n\sqrt{-1}\int_M
\left\{\bp\theta_t,\tilde H_t^\sigma\right\}_{h_t}
\wedge \p(\om_g^{n-1})
\right|
&=&c_n\left|\int_M
\left\langle \left\{\bp\theta_t,\tilde H_t^\sigma\right\}_{h_t}, *\p\omega_g^{n-1}\right\rangle
\om_g^n\right|  \nonumber\\
&\leq&c_n\int_M\left(  \left|*\p\omega_g^{n-1}\right|_g \cdot |\bp\theta_t|_{g,h_t}\cdot |\tilde H_t^\sigma|_{h_t}\right) \om_g^n \nonumber\\
&\leq&
C_1
\|\bp\theta_t\|_{L^2(M,\omega_g, h_t)},
\label{torsion term C1 estimate}
\ee
where $C_1=C_1(\om_g,\rk(E))>0$.
Hence,
 \eqref{cA derivative direct proof}
implies
\beq
\cA_\sigma'(t)\leq \Lambda_t^\sigma D_\sigma(t)
\label{cA derivative inequality direct proof}
\eeq
for every $t\geq 0$, where
\beq
D_\sigma(t)=
C_1\left\|\bp\theta_t\right\|_{L^2(M,\omega_g, h_t)}
+\left|
\int_M
\LL\bp\theta_t,\bp\tilde H_t^\sigma\RL_{g,h_t}\om_g^n
\right|
+
\left|
\int_M
\tr_E\left(\theta_tV_t\tilde H_t^\sigma\right)\om_g^n
\right| .
\eeq
By Proposition~\ref{prop tilde H estimates}, there is
$C_2=C_2(\om_g,h_0,P)>0$ such that
\beq
\left|\int_M\LL\bp\theta_t,\bp\tilde H_t^\sigma\RL_{g,h_t}\om_g^n\right|
\leq  \left\| \bp\tilde H_t^\sigma \right\|_{L^{2}(M,\omega_g, h_t)}\cdot  \|\bp\theta_t\|_{L^2(M,\omega_g, h_t)}\leq   C_2\|\bp\theta_t\|_{L^2(M,\omega_g, h_t)}.
\eeq
Moreover, the pointwise Cauchy-Schwarz inequality gives
\beq
\left|\tr_E(\theta_tV_t\tilde H_t^\sigma)\right|
\leq
\left(\tr_E(\theta_t^2V_t)\right)^{1/2}
\left(\tr_E(V_t\tilde H_t^{2\sigma})\right)^{1/2}.
\eeq
Since $0 \le \widetilde{H}_t^{2\sigma} \le \mathrm{id}_E$ and
Proposition~\ref{prop uniform Ric2 bound} gives $$h_t \ge C_3\,h_0$$
for some constant $C_3 = C_3(\omega_g, h_0, P) > 0$,
there exists a constant $C_4 = C_4(\omega_g, h_0, P) > 0$ such that
\beq \mathrm tr_E\left(V_t \widetilde{H}_t^{2\sigma}\right) \leq
C_4 .
\eeq
Consequently,
\beq
D_\sigma(t)
\leq C_5\|\bp\theta_t\|_{L^2(M,\omega_g, h_t)}
+C_5\left(\int_M\tr_E(\theta_t^2V_t)\om_g^n\right)^{1/2},
\label{D sigma direct bound}
\eeq
where $C_5=C_5(\om_g,h_0,P)>0$.
Integrating \eqref{ev-theta} over $M$ gives
\beq
-\frac12\frac{d}{dt}\int_M\tr_E(\theta_t^2)\om_g^n
=
\int_M|\bp\theta_t|_{g,h_t}^2\om_g^n
+
\int_M\tr_E(\theta_t^2V_t)\om_g^n.
\eeq
After integrating in time, the right-hand side has finite integral on
$[0,\infty)$. Hence \eqref{D sigma direct bound} implies
\beq
D_\sigma\in L^2(0,\infty).
\label{D sigma L2 direct}
\eeq

We now prove \eqref{A sigma chosen time nonpositive}, i.e.,
\beq \limsup_{i\to\infty}T_\sigma(t_i)=
\limsup_{i\to\infty}
\Lambda_{t_i}^{-\sigma}\cA_\sigma(t_i)
\leq 0.
\label{normalized cA chosen time nonpositive}
\eeq
Suppose, for contradiction, that this fails.
After passing to a subsequence, there exists a constant $\delta > 0$ such that
\beq \Lambda_{t_i}^{-\sigma}\cA_\sigma(t_i)\geq 2\delta
\label{positive normalized cA contradiction assumption}
\eeq
holds for all $t_i$.
Choose $\rho>C_6/(\sigma\delta)$, where
\beq
C_6:=\rk(E)\mathrm{Vol}(M,\om_g).
\eeq
Since $D_\sigma\in L^2(0,\infty)$ and $t_i\to\infty$, Cauchy-Schwarz gives
\beq
\int_{t_i-\rho}^{t_i}D_\sigma(s)ds
\leq
\rho^{1/2}
\left\|D_\sigma\right\|_{L^2([t_i-\rho,t_i])}
\to0.
\eeq
Thus, for all large $i$, one has
\beq
t_i>\rho,
\qquad
\int_{t_i-\rho}^{t_i}D_\sigma(s)ds\leq\delta.
\label{small D interval}
\eeq
For $t\in[t_i-\rho,t_i]$, integrating the first inequality in
\eqref{cA derivative inequality direct proof} from $t$ to $t_i$ gives
\begin{eqnarray}
\cA_\sigma(t)
&\geq&
\cA_\sigma(t_i)-\int_t^{t_i}\Lambda_s^\sigma D_\sigma(s)ds \nonumber\\
&\geq&
2\delta\Lambda_{t_i}^\sigma
-\Lambda_{t_i}^\sigma
\int_{t_i-\rho}^{t_i}D_\sigma(s)ds \nonumber\\
&\geq&
\delta\Lambda_{t_i}^\sigma,
\label{cA lower on interval}
\end{eqnarray}
where we use \eqref{definitionofLambda}, \eqref{positive normalized cA contradiction assumption},
and \eqref{small D interval}. Therefore, by \eqref{Bderivative},
\beq
\cB_\sigma(t_i)-\cB_\sigma(t_i-\rho)
=
-\sigma\int_{t_i-\rho}^{t_i}\cA_\sigma(s)ds
\leq
-\sigma\delta\rho\Lambda_{t_i}^\sigma.
\eeq
Since $\cB_\sigma(t_i)\geq0$, it follows that
\beq
\cB_\sigma(t_i-\rho)\geq \sigma\delta\rho\Lambda_{t_i}^\sigma.
\label{cB lower contradiction}
\eeq
On the other hand, since $0<\tilde H_t^\sigma\leq\id_E$, we have
\beq
\cB_\sigma(t)
=
\Lambda_t^\sigma\int_M\tr_E(\tilde H_t^\sigma)\om_g^n
\leq
C_6\Lambda_t^\sigma .
\eeq
Using \eqref{definitionofLambda}, we get
\beq
\cB_\sigma(t_i-\rho)
\leq
C_6\Lambda_{t_i-\rho}^\sigma
\leq
C_6\Lambda_{t_i}^\sigma.
\label{cB upper contradiction}
\eeq
Equations \eqref{cB lower contradiction} and \eqref{cB upper contradiction}
contradict the choice $\rho>C_6/(\sigma\delta)$. Hence
\eqref{normalized cA chosen time nonpositive} holds.\eproof

\vskip 2\baselineskip

\section{Proof of Theorem \ref{main}}

Let $(M,\om_g)$ be a compact Gauduchon manifold and $(E,h_0)$ be a Hermitian vector bundle on $M$. For any positive integer $m$, let
\beq W^{m,2}(M, E^*\ts E)\eeq
be the space of $W^{m,2}$ sections of $E^*\ts E$ with respect to the metrics $\omega_g$ and $h_0$. We follow the classical setting in \cite{UY86}.
Recall that an element $\pi\in W^{1,2}(M,E^*\ts E)$ is called a \emph{weakly holomorphic projection} of $E$ if the following identities
\beq
\pi^*=\pi=\pi^2\eeq and \beq  \left(\mathrm{Id}_E-\pi\right)\circ\bp\pi =0
\eeq
hold almost everywhere on $M$ where the adjoint is taken with respect to $h_0$. Uhlenbeck and Yau proved in \cite{UY86}  that every weakly holomorphic projection represents a coherent subsheaf $\cF$ of $E$:

\blemma[Uhlenbeck-Yau] \label{lem weakly holomorphic subbundle}
Let $(E,h_0)$ be a Hermitian holomorphic vector bundle over a compact Hermitian manifold $(M, \om_g)$.
If  $\pi\in W^{1,2}(M,E^*\ts E)$ is a weakly holomorphic projection,
then there exists a coherent subsheaf $\cF$ of $E$, and an analytic subset $\Sigma\subset M$ with the following properties:
\bd
\item $\codim_M \Sigma\geq 2$;
\item The restriction $\pi|_{M \setminus \Sigma}$ is smooth and the identities  $\pi^* = \pi = \pi^2$ and $$\left(\mathrm{Id}_E-\pi\right)\circ\bp\pi= 0$$ hold   pointwisely on $M \setminus \Sigma$;
\item $\cF|_{M\setminus \Sigma}=\im\left(\pi|_{M\setminus \Sigma}\right)$ is a holomorphic subbundle of $E|_{M\setminus \Sigma}$.
\ed
\elemma

\bproof Since the proof is purely local, the K\"ahler metric in the original argument (\cite{UY86}) can be replaced by a Hermitian metric.
\eproof

\noindent We obtain the following result along the flow \eqref{PHYMF}, which fits the setting of weakly holomorphic projections.
\btheorem \label{thm convergent subsequence}
Let $(E, h_0)$ be a Hermitian vector bundle over a compact Gauduchon manifold
$(M, \omega_g)$, and let $P \in \operatorname{Herm}^+(E)$.
Suppose that $h_t$ is the solution to the flow \eqref{PHYMF} defined for $0 \leq t < \infty$.
Assume that
\begin{equation}
\limsup_{t\to\infty} \Lambda_t = +\infty,
\end{equation}
and let $\{t_i\}$ be the sequence chosen in Theorem~\ref{prop key estimate for degree comparison}.
Then there exist a subsequence of $\{t_i\}$, again denoted by $\{t_i\}$, and a sequence $\{\sigma_j\}$
with $\sigma_j \in (0,1]$ satisfying $\sigma_j \to 0$, such that:
\bd
\item $\tilde H_{t_i}\> \tilde H_\infty$ in the weak  $W^{1,2}(M,E^*\ts E)$ sense for some non-zero $\tilde H_\infty$ in $W^{1,2}(M,E^*\ts E)$.
Moreover, for each fixed $\sigma_j$, $\tilde H_{t_i}^{\sigma_j}\> \tilde H^{\sigma_j}_\infty$ in the weak  $W^{1,2}(M,E^*\ts E)$ sense.
\item $\tilde H_\infty^{\sigma_j}\> \tilde H$ in the weak $W^{1,2}(M,E^*\ts E)$ sense for some $\tilde H\in W^{1,2}(M,E^*\ts E)$.
\item $\pi:=\mathrm{Id}_E-\tilde H$ is a weakly holomorphic projection of $E$.
\ed
\etheorem

\bproof By Proposition \ref{traceestimate} and Proposition \ref{prop tilde H estimates},
there exist  constants $C_1=C_1(\om_g,h_0,P)$ and $C_2=C_2(\om_g,h_0,P)$ such that for all $t\in [0,+\infty)$ and $\sigma\in(0,1]$,
\beq
\left\| \tilde H_t^\sigma \right\|_{W^{1,2}(M,\omega_g, h_0)}\leq C_1, \label{weakcompactness}
\eeq
and
\beq
\left\|\tilde H_t\right\|_{L^2(M,\omega_g, h_0)}\geq C_2. \label{lowerbound}
\eeq
Hence, assertions (1) and (2) follow from \eqref{weakcompactness} and the classical weak compactness theorem. The convergence $\tilde{H}_{t_i} \rightarrow \tilde{H}_\infty$ in $L^2(M, E^* \otimes E)$, together with \eqref{lowerbound}, implies that $\tilde{H}_\infty$ is nonzero. For (3), the identities $\pi^* = \pi = \pi^2$ hold almost everywhere. These follow from the bound $\tilde{H}_\infty \leq \mathrm{Id}_E$ and the convergence $\tilde{H}_\infty^{\sigma_j} \rightarrow \tilde{H}$ in $L^2(M, E^* \otimes E)$. The relation $\left(\mathrm{Id}_E-\pi\right)\circ\bp\pi = 0$ a.e. follows from Proposition~\ref{traceestimate}.
\eproof

\bproposition \label{prop degree estimate}
Retain the notation of Theorem~\ref{thm convergent subsequence}. Assume that
\begin{equation}
\sup_M \lambda_{\max}(H_{t_i}) \to +\infty.
\end{equation}
Then we have
\begin{equation}
 \int_M \left|\partial^{h_0} \pi\right|_{g,h_0}^2 \om_g^n\leq
\int_M \tr_E\left(\Lambda_{\om_g}\sq\Theta^{h_0}\cdot\left(\pi-\mathrm{Id}_E\right)\right) \om_g^n.
\label{key deg estimate}
\end{equation}
\eproposition

\begin{proof}
    As in the proof of Proposition \ref{traceestimate},  one has
    \[
    \left|\p^{h_0}\tilde H_{t_i}^{\sigma_j}\right|_{g,h_0}^2
    \leq  \frac{1}{\sigma_j}\Delta_\C\left(\tr_E \tilde H_{t_i}^{\sigma_j}\right)
    +\tr_E\left(\left(\Lambda_{\om_g} \left(\sq\Theta^{h_{t_i}}\right)-\Lambda_{\om_g}\left(\sq \Theta^{h_0}\right)\right)\cdot \tilde H_{t_i}^{\sigma_j}\right)
    \]
    Integrating over $M$, we obtain
    \begin{eqnarray}
    \int_M  \left|\p^{h_0}\tilde H_{t_i}^{\sigma_j}\right|_{g,h_0}^2  \om_g^n
    &\leq& -\int_M\tr_E\left( \Lambda_{\om_g}\left(\sq \Theta^{h_0}\right) \cdot \tilde H_{t_i}^{\sigma_j}\right)\om_g^n\nonumber\\
    &&+ \int_M\tr_E\left(\Lambda_{\om_g}\left(\sq \Theta^{h_{t_i}}\right)\cdot \tilde H_{t_i}^{\sigma_j}\right)\om_g^n.
    \end{eqnarray}
    By Theorem \ref{prop key estimate for degree comparison}, for any $\sigma_j\in(0,1]$,  one has
    \beq
    \limsup_{i\to\infty} \int_M\tr_E\left(\Lambda_{\om_g}\left(\sq \Theta^{h_{t_i}}\right)\cdot \tilde H_{t_i}^{\sigma_j}\right)\om_g^n\leq 0,
    \eeq
    and so
    \beq
    \limsup_{j\to\infty}\limsup_{i\to\infty}
    \left[\int_M  \left|\p^{h_0}\tilde H_{t_i}^{\sigma_j}\right|_{g,h_0}^2  \om_g^n
    +\int_M\tr_E\left( \Lambda_{\om_g}\left(\sq \Theta^{h_0}\right) \cdot \tilde H_{t_i}^{\sigma_j}\right)\om_g^n
    \right]
    \leq 0.\label{deg estimate key}
    \eeq

\noindent   On the other hand, by Theorem \ref{thm convergent subsequence}, the weak convergence in $W^{1,2}(M,E^*\ts E)$ implies that
    \beq
    \pi = \lim_{j\to\infty}\lim_{i\to\infty}\left(\mathrm{Id}_E-\tilde H_{t_i}^{\sigma_j}\right)
    \eeq
    in $L^2(M,E^*\ts E)$. In particular,
    \beq
    \int_{M} \tr_E\left(\Lambda_{\om_g}\sq\Theta^{h_0}\cdot\left(\pi-\mathrm{Id}_E\right)\right)\om_g^n
    = -\lim_{j\to\infty}\lim_{i\to\infty}\int_{M}
    \tr_E\left(\Lambda_{\om_g}\sq\Theta^{h_0}\cdot\tilde H_{t_i}^{\sigma_j}\right)\om_g^n.\label{deg estimate 1}
    \eeq
    From the weak convergence in Theorem \ref{thm convergent subsequence}, we have
    \begin{eqnarray}
    \left\|\p^{h_0}\pi\right\|_{L^2(M,\omega_g, h_0)}^2
    &=& \left(\p^{h_0}\pi,\p^{h_0}\pi\right)_{L^2(M,\omega_g, h_0)}\\
    &=& \lim_{j\to\infty}\lim_{i\to\infty}
    \left(\p^{h_0}\pi,\p^{h_0}\left(\mathrm{Id}_E-\tilde H_{t_i}^{\sigma_j}\right)\right)_{L^2(M,\omega_g, h_0)}\nonumber\\
    &\leq& \left\|\p^{h_0}\pi\right\|_{L^2(M,\omega_g, h_0)}
    \cdot \limsup_{j\to\infty}\limsup_{i\to\infty} \left\|\p^{h_0} \tilde H_{t_i}^{\sigma_j} \right\|_{L^2(M,\omega_g, h_0)}.
    \end{eqnarray}
    This implies
    \beq
    \int_M \left|\partial^{h_0} \pi\right|_{g,h_0}^2 \om_g^n    \leq
    \limsup_{j\to\infty}\limsup_{i\to\infty}\int_M \left|\p^{h_0} \tilde H_{t_i}^{\sigma_j} \right|_{g,h_0}^2  \om_g^{n}.\label{deg estimate 2}
    \eeq
    From \eqref{deg estimate key}, \eqref{deg estimate 1} and \eqref{deg estimate 2}, we establish
    \beq  \int_M \left|\partial^{h_0} \pi\right|_{g,h_0}^2 \om_g^n\leq
    \int_M \tr_E\left(\Lambda_{\om_g}\sq\Theta^{h_0}\cdot\left(\pi-\mathrm{Id}_E\right)\right) \om_g^n.
        \eeq
    This completes the proof.
\end{proof}

\vone
\btheorem   Let $(M,\om_g)$ be a compact Gauduchon manifold, and $E$ be a holomorphic vector bundle over $M$.
Suppose that for every  coherent subsheaf $\cF\subset E$ with $\rk(\cF)<\rk(E)$, the inequality holds:
\beq
\deg_{\omega_g}(\cF)<\deg_{\omega_g}(E).
\eeq
Then, for any Hermitian metric $h_0$ on $E$ and any positive-definite tensor $P\in \Gamma(M,E^*\ts \overline E^*)$, the prescribed Hermitian-Yang-Mills flow \eqref{PHYMF} admits a global smooth solution on $[0,\infty)$. Moreover, the flow converges smoothly as $t\>\infty$ to a Hermitian metric  $h_\infty$ satisfying
\beq
\Lambda_{\om_g}\left(\sqrt{-1}\, R^{h_\infty}\right) = P.
\eeq
\etheorem

\begin{proof} Case $1$. If
    \beq \limsup_{t\>\infty}\Lambda_t = \limsup_{t\>\infty}\sup_M \lambda_{\max}(H_t)<+\infty,\eeq  then there exists a uniform constant $C_1>0$ such that  the following estimate holds along the flow \eqref{PHYMF}
    \beq h_t\leq C_1 h_0.\eeq
By Theorem \ref{thm first case},  the flow \eqref{PHYMF} converges as desired.

Case $2$.  Suppose that \beq \limsup_{t\>\infty}\Lambda_t = \limsup_{t\>\infty}\sup_M \lambda_{\max}(H_t)=+\infty.\eeq
    We choose sequences $\{t_i\}$ and $\{\sigma_j\}$ as in Theorem \ref{thm convergent subsequence}.
    In particular, by Lemma \ref{lem weakly holomorphic subbundle},  there exists a weakly holomorphic projection $\pi$ which  represents a coherent subsheaf $\cF$ of $E$.
    Let $\Sigma\subset M$ be the analytic subset as in Lemma \ref{lem weakly holomorphic subbundle}. Then $\Sigma$ has measure $0$ and $\pi$ is smooth and has constant rank on $M\setminus \Sigma$.
 Moreover, Theorem \ref{thm convergent subsequence} implies that $\tilde H_{t_i} \to \tilde H_\infty$ and $\tilde H_\infty^{\sigma_j}\to \tilde H$ in $L^2(M,E^*\ts E)$.
    By choosing a further subsequence, we may assume
    \beq
    \tilde H_\infty=\lim_{i\to\infty}\tilde H_{t_i} \qtq{and}
    \tilde H = \lim_{j\to\infty}\tilde H_\infty^{\sigma_j}
    \eeq
    almost everywhere on $M$.
    Since $\tilde H_\infty$ is non-zero in $W^{1,2}(M, E^*\ts E)$, it has a strictly positive eigenvalue on a set of positive measure in $M \setminus \Sigma$. We deduce that $\rk (\tilde H)\geq 1$ on $M\setminus \Sigma$. Moreover,
    from our construction,  eigenvalues of $\tilde H$ lie in $\{0,1\}$.
    This implies
    \beq
    \rk(\cF)=\rk(\pi)=\rk\left(\mathrm{Id}_E-\tilde H|_{M\setminus \Sigma}\right)\leq \rk(E)-1.
    \eeq
    By Lemma \ref{lem weakly holomorphic subbundle}, $\pi|_{M\setminus \Sigma}$ coincides with the $h_0$-orthogonal projection from $E|_{M\setminus \Sigma}$ to $\cF|_{M\setminus \Sigma}$.
   By the second fundamental form computation, the degree of $\cF$ is given by:
    \beq
    \deg_{\omega_g}(\cF)
    = \int_{M\setminus \Sigma} \tr_{E}\left(\Lambda_{\om_g}\left(\sq\Theta^{h_0}\right)\cdot \pi\right)\om_g^n
    -\int_{M\setminus \Sigma} \left|B\right|_{g,h_0}^2 \om_g^n,
    \eeq
where $B=\left(\mathrm{Id}_E-\pi\right)\circ \left(\p^{h_0}\pi\right)\circ \iota$ is the second fundamental form of the inclusion $\iota: \cF\>E$. A simple calculation shows
\beq |B|^2_{g,h_0}=|\p^{h_0}\pi|^2_{g, h_0}.\eeq
Hence, we obtain
    \beq
   \deg_{\omega_g}(\cF)
    = \int_{M\setminus \Sigma} \tr_{E}\left(\Lambda_{\om_g}\left(\sq\Theta^{h_0}\right)\cdot \pi\right)\om_g^n
    -\int_{M\setminus \Sigma} \left|\partial^{h_0} \pi\right|_{g,h_0}^2 \om_g^n.
    \eeq
    By the estimate \eqref{key deg estimate} in Proposition \ref{prop degree estimate},
    \beq
    \deg_{\omega_g}(\cF) \geq \int_M \tr_E\left(\Lambda_{\om_g}\sq\Theta^{h_0}\right)  \om_g^n= \deg_{\omega_g}(E),
    \eeq
    and this is a contradiction. The proof is completed.
\end{proof}

\bremark As pointed out in \cite{WYY26+}, by Serre duality, the results of Theorem~\ref{main} extend to negative bundles.
\eremark

\vskip 2\baselineskip

\section{Applications in algebraic geometry}
In this section, we discuss applications of Theorem \ref{main} in algebraic geometry and prove Theorem \ref{main4}, Theorem \ref{main5}, and Corollary \ref{main7}.\\

\bproof[Proof of Theorem \ref{main4}]
We recall the classical characterization of pseudo‑effectivity on compact complex manifolds (\cite{Buc99, Lam99}, see also \cite[Theorem~3.4]{Yang19}): 
a holomorphic line bundle $L$ is pseudo‑effective if and only if
\beq 
\int_M c_1^{\mathrm{BC}}(L) \wedge \omega_{\mathrm{G}}^{\,n-1} \;\ge\; 0
\eeq
for every Gauduchon metric $\omega_{\mathrm{G}}$ on $M$. 
Moreover,  $L$ is pseudo‑effective but not unitary flat  if and only if 
the inequality is strict for some (hence every) Gauduchon metric.\\

Suppose that $(1)$ holds. For any Hermitian metric form $\omega$ on $M$, let  $\omega_{g}=e^f\omega$ be a Gauduchon metric (\cite{Gau84}).   By $(1)$, for
every proper coherent subsheaf $\cF\subset E$, the following inequality holds:
\beq  \deg_{\omega_g}(E)-\deg_{\omega_g}(\cF)=\deg_{\omega_g}(\cQ)=
2\pi n\int_M c^{\mathrm{BC}}_1(\cQ)\wedge \omega_g^{n-1}>0,
\eeq
where $\cQ$ is the coherent quotient sheaf $\cQ=E/\cF$.
By Theorem \ref{main}, for any positive-definite Hermitian tensor $P_1\in \Gamma(M, E^*\ts \bar E^*)$,  there exists a Hermitian metric $h_0$ on $E$ such that 
$$\Lambda_{\omega_g}\left(\sq R^{h_0}\right)=P_1.$$
In particular, 
$$\Lambda_{\omega}\left(\sq R^{h_0}\right)=e^{f}P_1>0.$$
Hence, by \cite[Theorem~1.2]{XYY26+}, for any (quasi-)positive-definite Hermitian tensor $P \in  \Gamma(M, E^*\ts \bar E^*)$, there exists a unique Hermitian metric $h$ on $E$ such that 
\beq \Lambda_\omega\left(\sq R^{h}\right)=P. \eeq 

Conversely, we assume that $(2)$ is valid. Then  for any Gauduchon metric $\omega_g$ and for any (quasi-)positive-definite Hermitian tensor $P \in  \Gamma(M, E^*\ts \bar E^*)$, there exists a Hermitian metric $h$ on $E$ such that 
\beq \Lambda_\omega\left(\sq R^{h}\right)=P. \eeq 
Since the Hermitian-Yang-Mills tensors are non-decreasing along quotients, for any nonzero coherent quotient sheaf $\cQ$ of $E$,  one has
\beq 
\deg_{\omega_g}(\cQ)=2\pi n\int_M c^{\mathrm{BC}}_1(\cQ)\wedge \omega_g^{n-1}>0.\eeq 
In particular,  $\cQ$ is pseudo-effective but not unitary flat.
\eproof

\noindent The proof of Corollary~\ref{main2} proceeds along the same lines as the proof of Theorem~\ref{main4}.

\bproof[Proof of Theorem \ref{main5}] By \cite[Theorem~1.7]{Ou23},  if  $M$ is  a rationally connected  manifold and $-K_M$ is nef, then $TM$ is generically ample. In particular, by \cite[Theorem~1.4]{Ou23}  for any nonzero coherent quotient sheaf $\cQ$ of $TM$, the determinant line bundle $\det\cQ$ is pseudo-effective. The generic ampleness of $TM$ implies that there exists a generic curve $C\subset M$ such that $\det(\cQ)\cdot C>0$.  In particular,  $\det(\cQ)$ is  not unitary flat. Now the conclusion follows from Theorem \ref{main4}. On the other hand, if $M$ is a projective manifold with $-K_M$ strictly nef, it is established in \cite[Theorem~1.2]{LOY19} that $M$ is rationally connected and $-K_M$ is nef. This completes the proof.
\eproof 

\bremark It is natural to consider the boarder line case whether Theorem \ref{main5} holds when $M$ is rationally connected and $-K_M$ is pseudo-effective but non-trivial.
\eremark 

\bproof[Proof of Corollary~\ref{main7}] $(2)\Longrightarrow(1)$ is proved in \cite[Corollary~1.5]{Yang18}.  $(1)\Longrightarrow(2)$ is obtained in \cite[Theorem~1.7]{LZZ25}.  $(3)\Longrightarrow(2)$ is obvious. For $(2)\Longrightarrow(3)$, we suppose that  there exists a Hermitian metric $\omega$ on $M$ and a Hermitian metric  $h_0$ on $T^{1,0}M$ such that 
$ \Lambda_\omega\left(\sq R^{h_0}\right)>0$. 
By Theorem \ref{main} or \cite[Theorem~1.2]{XYY26+}, for any (quasi-)positive Hermitian tensor  $P$, there exists a unique Hermitian metric $h$ on $M$ such that 
\beq \Lambda_\omega\left(\sq R^{h}\right)=P. \eeq 
This completes the proof.
\eproof 


\end{document}